\documentclass[11pt]{article}
\usepackage[T1]{fontenc}
\usepackage{lmodern}
\usepackage[margin=1in]{geometry}
\usepackage{amsmath,amssymb,amsthm,mathtools}
\usepackage{microtype}
\usepackage{booktabs}
\usepackage{needspace}
\usepackage[numbers,sort&compress]{natbib}
\usepackage[hidelinks]{hyperref}
\usepackage[nameinlink,noabbrev]{cleveref}
\hypersetup{pdftitle={Linear stability and divisibility gaps for Gaussian maxima},
 pdfauthor={Chahel Paatur}}
\numberwithin{equation}{section}
\newtheorem{theorem}{Theorem}[section]
\newtheorem{proposition}{Proposition}[section]
\newtheorem{lemma}{Lemma}[section]
\newtheorem{corollary}{Corollary}[section]
\theoremstyle{definition}

\theoremstyle{remark}
\newtheorem{remark}{Remark}[section]
\newcommand{\R}{\mathbb R}
\newcommand{\E}{\mathbb E}
\newcommand{\Prob}{\mathbb P}
\newcommand{\ip}[2]{\langle #1,#2\rangle}
\newcommand{\norm}[1]{\lVert #1\rVert}
\newcommand{\supp}{\operatorname{supp}}
\newcommand{\conv}{\operatorname{conv}}
\newcommand{\tr}{\operatorname{tr}}
\newcommand{\spec}{\operatorname{spec}}
\newcommand{\diag}{\operatorname{diag}}
\newcommand{\dist}{\operatorname{dist}}
\newcommand{\op}{\mathrm{op}}
\newcommand{\Phibar}{\overline\Phi}
\newcommand{\cA}{\mathcal A}
\newcommand{\cR}{\mathcal R}
\newcommand{\cF}{\mathcal F}
\newcommand{\basis}{\mathcal B}
\newcommand{\ind}{\mathbf 1}
\newenvironment{proofof}[1]{\begin{proof}[Proof of #1]}{\end{proof}}
\title{Linear stability and divisibility gaps\\for Gaussian maxima}
\author{Chahel Paatur\thanks{\href{mailto:chahelpaatur@gmail.com}{chahelpaatur@gmail.com}}\\[3pt]
\small University of Illinois Urbana-Champaign\\
\small Urbana, Illinois, USA}
\date{}
\begin{document}
\maketitle

\begin{abstract}
For a compact set of unit vectors carrying an isotropic measure, we prove
that its Gaussian mean width minus the expected maximum $a_k$ of $k$
independent standard normals is comparable to its Hausdorff distance from
an orthonormal basis. The constants depend only on the dimension, with no
restriction on the number of directions or their weights. Applied to
correlation matrices of size $N$ with $k$ equal nonzero eigenvalues, this
gives a minimum of $a_k$ when $k$ divides $N$ and an excess of order
$N^{-1/2}$ otherwise. In dimension two we determine the leading term.
For odd $N$, the minimum is
$1/\sqrt\pi+1/(2\sqrt{\pi N})+O(N^{-1})$.
An asymmetric three-direction frame attains this expansion. We also
characterize the scaled angular distributions of asymptotic minimizers,
prove covariance-block recovery, and derive a finite-dimensional lower
bound for the limiting problem in higher rank.
\end{abstract}

\noindent\textbf{Keywords.} Gaussian maximum; isotropic measure; stability;
tight frame; correlation matrix; Brascamp--Lieb inequality.

\smallskip
\noindent\textbf{2020 Mathematics Subject Classification.}
Primary 60E15, 52A40; Secondary 60G15, 42C15.

\section{Introduction and main results}

Let $g$ be a standard Gaussian vector in $\R^k$, where $k\ge2$, and let
$S\subset S^{k-1}$ be nonempty and compact. Define
\[
 h_S(x)=\sup_{u\in S}\ip{x}{u},\qquad
 w(S)=\E h_S(g),\qquad
 a_k=\E\max_{1\le j\le k}g_j.
\]
We use the Gaussian normalization of mean width. In particular, $w(S)$ depends
only on $\conv S$, although the distance in our stability theorem concerns the
set $S$ itself. The assumption $S\subset S^{k-1}$ implies
$|h_S(g)|\le\norm{g}$, so all these expectations are finite.

We call $S$ \emph{admissible} if there is a finite positive Borel measure $\mu$
supported in $S$ such that
\begin{equation}\label{eq:isotropy}
 \int_S uu^{\mathsf T}\,d\mu(u)=I_k.
\end{equation}
Taking traces gives $\mu(S)=k$. We do not require $\supp\mu=S$. This convention
allows directions of vanishing weight in limits and makes the class of admissible
sets compact. Write $\cA_k$ for this class. Every finite weighted isotropic family
$(u_i,d_i)_{i=1}^N$, with $\norm{u_i}=1$, $d_i>0$, and
$\sum_i d_i u_i u_i^{\mathsf T}=I_k$, gives an admissible set.

Let $\basis_k=\{e_1,\ldots,e_k\}$ and put
\begin{equation}\label{eq:distances}
 \begin{split}
 d_H(S,T)&=\max\left\{\sup_{u\in S}\dist(u,T),
                       \sup_{v\in T}\dist(v,S)\right\},\\
 \delta(S)&=\min_{Q\in O(k)}d_H(S,Q\basis_k),\qquad
 \Delta(S)=w(S)-a_k,\qquad r_k=\E\norm{g}.
 \end{split}
\end{equation}
The distance is Euclidean. The minimum exists because $O(k)$ is compact. An
orthonormal basis here consists of $k$ oriented unit vectors; it does not include
both signs of each vector. Also $r_k\le\sqrt{k}$ by Jensen's inequality.

The geometric Brascamp--Lieb inequality implies
\begin{equation}\label{eq:dom-intro}
 \Prob\{h_S(g)\le t\}\le\Phi(t)^k\qquad(t\in\R),
\end{equation}
where $\Phi$ is the standard Gaussian distribution function. In particular,
$\Delta(S)\ge0$. Equality in the expectation holds precisely when $S$ is an
orthonormal basis. Both statements are proved in
\cref{sec:preliminaries}. Our first result quantifies equality.

\begin{theorem}[Linear stability]\label{thm:stability}
For every $k\ge2$ there is a constant $c_k>0$ such that every $S\in\cA_k$ satisfies
\begin{equation}\label{eq:stability}
 c_k\delta(S)\le\Delta(S)\le r_k\delta(S).
\end{equation}
The constants are independent of the cardinality of $S$ and of the choice of
isotropic measure supported in $S$. For fixed $k$, the linear power is optimal.
There are admissible sets with $\delta(S)\downarrow0$ and
$\Delta(S)$ comparable to $\delta(S)$.
\end{theorem}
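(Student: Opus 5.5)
The upper bound is the easy half. Choose $Q$ attaining $\delta(S)$ and put $T=Q\basis_k$, so that $w(T)=a_k$. For every $x$ we have $|h_S(x)-h_T(x)|\le d_H(S,T)\norm{x}$. Indeed, if $u\in S$, pick $v\in T$ with $\norm{u-v}\le d_H$; then $\ip{x}{u}\le h_T(x)+d_H\norm{x}$, and the same argument works with $S$ and $T$ exchanged. Taking expectations gives $\Delta(S)=\E[h_S(g)-h_T(g)]\le\delta(S)\,r_k$.

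For the lower bound I would combine compactness with a local linear estimate near the bases.

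\emph{Step 1 (compactness).} The class $\cA_k$ is compact in the Hausdorff metric: isotropic measures have mass $k$, so weak limits exist and remain isotropic and supported in the limit set. Moreover $w$ and $\delta$ are continuous. By the equality case proved in \cref{sec:preliminaries}, $\Delta$ vanishes only on bases. Hence for each $\varepsilon>0$ there is $\eta(\varepsilon)>0$ with $\Delta\ge\eta$ whenever $\delta\ge\varepsilon$. Since $\delta\le2$, this yields $\Delta\ge(\eta/2)\delta$ on $\{\delta\ge\varepsilon\}$. It therefore suffices to prove $\Delta\ge c\,\delta$ when $\delta(S)\le\varepsilon_0$.

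\emph{Step 2 (local structure).} Suppose $\delta(S)=\delta\le\varepsilon_0$ and, after rotating, $d_H(S,\basis_k)=\delta$. Then $S\subset\bigcup_i B(e_i,\delta)$, and each cap contains a point of $S$. Let $\mu_i$ be the mass of $\mu$ in the $i$-th cap. Isotropy gives $\sum_i\mu_i u_iu_i^{\mathsf T}$-type averages equal to $I$, so $\mu_i=1+O(\delta)$. The key dichotomy concerns where the Hausdorff distance is attained. It is attained either by a far point $u\in S$ with $\norm{u-e_i}\approx\delta$, or by the basis side; the latter is impossible up to constants, since every cap has a point of $S$ near $e_i$ after re-optimizing $Q$. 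So some $u\in S$ lies at distance $\gtrsim\delta$ from the best-fitting basis. I would choose $Q$ as the minimizer and use first-order optimality: in each cap, the $\mu$-barycenter direction $b_i$ is, up to $O(\delta^2)$, an orthonormal frame, and one can take it as the reference basis.

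\emph{Step 3 (first-order gain).} Write $h_S\ge\max(h_B,\ip{x}{u})$, where $B=\{b_i\}$ is the near-orthonormal frame and $u$ is the far point. I would bound $\Delta\ge[w(B)-a_k]+\E[(\ip{g}{u}-h_B(g))_+]$. The second term is linear in $\delta$: $u=b_i\cos\theta+v\sin\theta$ with $\theta\gtrsim\delta$, and on the event that $g_i$ is the maximum, the gain is about $\theta(\ip{g}{v})_+$ minus an $O(\theta^2)$ term. This event has probability $1/k$ and the gain has expectation $\gtrsim\theta$, since $\ip{g}{v}$ is not controlled by the conditioning. The first term is the danger: a non-orthonormal $B$ might have $w(B)<a_k$. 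I would avoid it by not discarding isotropy. Apply the Brascamp--Lieb domination \eqref{eq:dom-intro} to the full measure and quantify it. Alternatively, perturb: $w(B)-a_k$ is smooth in $B$ with gradient zero at orthonormal frames, up to the isotropy constraint, because orthonormal bases are constrained minimizers. This gives $w(B)-a_k\ge-C\delta^2$, which is absorbed by the linear gain for $\delta\le\varepsilon_0$.

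I expect Step 3 to be the main obstacle: justifying the $O(\delta^2)$ lower bound for the frame part uniformly over arbitrary measures inside the caps.

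For optimality, take $S=\basis_k\cup\{u_\theta\}$ with $u_\theta=\cos\theta\,e_1+\sin\theta\,e_2$. Reweight: for small $\theta$, give $u_\theta$ weight $t$ and $e_1,e_2$ compensating weights, choosing a second tilted vector symmetric to $u_\theta$ so the cross term cancels. Then $\delta\asymp\theta$, and the computation in Step 3 gives $\Delta\asymp\theta$ from above by $r_k\delta$ and from below by $c_k\delta$. So the linear power cannot be improved.
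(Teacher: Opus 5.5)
The overall plan is the paper's: the pointwise bound $|h_S-h_T|\le d_H(S,T)\norm{x}$ for the upper estimate, compactness plus the equality case away from the bases, a local linear estimate from cluster barycenters and one far point, and a tilted example. The example needs only $\delta\asymp\theta$, which holds because $u_{\pm\theta}$ must be matched to the same basis vector, giving $\delta\ge\sin\theta$; it then inherits $\Delta\asymp\theta$ from the inequality itself.

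\textbf{The gap.} It is in the local estimate, exactly where you suspected. Everything hinges on $w(B)\ge a_k-C\delta^2$, and you have no valid argument for it.
\begin{itemize}
\item In Step 2 you attribute the $O(\delta^2)$ closeness of the barycenters to an orthonormal frame to ``first-order optimality'' of the Hausdorff-minimizing $Q$. That rotation carries no information about where $\mu$ puts its mass inside each cap.
\item Your mass bound $\mu_i=1+O(\delta)$ is too weak to produce a quadratic error.
\item In Step 3, the quantified Brascamp--Lieb route is left unspecified.
\item The perturbative claim fails as stated: $w$ has nonzero first variation at an orthonormal basis in directions that break orthogonality. For $k=2$, $w(\{b_1,b_2\})=\norm{b_1-b_2}/\sqrt{2\pi}$ decreases linearly once the angle between unit vectors $b_1,b_2$ drops below $\pi/2$.
\end{itemize}
Without isotropy, barycenters of caps of radius $\delta$ can deviate from orthonormality by order $\delta$. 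So $w(B)-a_k\ge-C\delta^2$ needs precisely the quadratic closeness you have not proved.

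\textbf{The missing idea.} Isotropy splits into between-cluster and within-cluster second moments.
\begin{itemize}
\item Let $D_j=\mu(S_j)$, let $m_j=D_j^{-1}\int_{S_j}u\,d\mu$ be the unnormalized centers, and let $A=[\sqrt{D_1}m_1\ \cdots\ \sqrt{D_k}m_k]$.
\item Then $AA^{\mathsf T}=I_k-C$, where $C=\sum_j\int_{S_j}(u-m_j)(u-m_j)^{\mathsf T}d\mu\succeq0$ and $\tr C\le\sum_j\int_{S_j}\norm{u-e_j}^2d\mu\le k\delta^2$.
\item The polar decomposition $A=(I_k-C)^{1/2}Q$ gives $\norm{A-Q}_{\op}\le\norm{C}_{\op}\le k\delta^2$.
\item The diagonal entries of the isotropy identity give $|D_j-1|\le k\delta^2$. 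This is because $1-\ip{u}{e_j}^2$ for $u\in S_j$ and $\ip{u}{e_j}^2$ for $u$ in the other caps are both $O(\delta^2)$.
\item Hence $\norm{m_j-q_j}\le2k\delta^2$ for the columns $q_j$ of $Q$.
\item Since $m_j\in\conv S$ (normalized directions are not), $h_S\ge\max_j\ip{x}{m_j}$ holds exactly. The Lipschitz bound then gives $\E\max_j\ip{g}{m_j}\ge a_k-2kr_k\delta^2$, with no smoothness argument.
\item Minimality of $\delta$ gives $d_H(S,\{q_j\})\ge\delta$. This yields a far point $u\in S_j$ whose displacement from $m_j$ orthogonal to $q_j$ is at least $\delta-O(\delta^2)$.
\end{itemize}

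\textbf{The gain step also needs tightening.} On $\{g_i\text{ maximal}\}$ the variable $\ip{g}{v}$ is in fact constrained; take $v=q_l$. The paper instead restricts to a box on which $\max_l\ip{g}{m_l}=\ip{g}{m_j}$, even though the $m_l$ are not orthonormal. It obtains a uniform constant $\beta_k>0$ by compactness over unit $v\perp q_j$.
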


The lower constant obtained below is not explicit globally. A local linear
estimate is combined with compactness away from the extremizers. In particular,
\cref{thm:stability} is a fixed-dimension statement and does not assert uniform
control as $k$ tends to infinity.

Our second result concerns a matrix optimization problem. For integers $N\ge k$,
let
\begin{equation}\label{eq:matrix-class}
 \cR_{N,k}=\left\{R\in\R^{N\times N}:\begin{array}{l}
 R=R^{\mathsf T}\succeq0,\quad\diag R=\mathbf1,\\
 \spec(R)=\bigl((N/k)^{[k]},0^{[N-k]}\bigr)
 \end{array}\right\},
\end{equation}
where brackets denote multiplicity. This class is nonempty for every $N\ge k$.
Set
\begin{equation}\label{eq:L}
 L_{N,k}=\min_{R\in\cR_{N,k}}\E\max_{1\le i\le N}X_i,
 \qquad X\sim N(0,R).
\end{equation}
Nonemptiness follows from the real Schur--Horn theorem, and attainment
follows from the frame representation in \cref{sec:arithmetic}.

\begin{theorem}[Divisibility gap]\label{thm:arithmetic}
Fix $k\ge2$, and let $c_k$ be as in \cref{thm:stability}.
\begin{enumerate}
 \item If $N=mk$, then $L_{N,k}=a_k$. The minimizing matrices are exactly the
 matrices that, after a simultaneous permutation of rows and columns, consist
 of $k$ diagonal blocks equal to the $m\times m$ all-ones matrix.
 \item If $N\ge k$ and $k\nmid N$, then
 \begin{equation}\label{eq:arithmetic-lower}
  L_{N,k}-a_k\ge\frac{c_k}{\sqrt{kN}}.
 \end{equation}
 \item If $k\nmid N$ and $N\ge2k^2$, then
 \begin{equation}\label{eq:arithmetic-upper}
  L_{N,k}-a_k\le\frac{2\sqrt{k}\,r_k}{\sqrt N}
                  \le\frac{2k}{\sqrt N}.
 \end{equation}
\end{enumerate}
Consequently, along integers not divisible by $k$,
$L_{N,k}-a_k\asymp_k N^{-1/2}$ as $N\to\infty$.
\end{theorem}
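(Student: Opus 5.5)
The plan is to pass to the frame representation. A matrix $R\in\cR_{N,k}$ has rank $k$ and satisfies $R^2=(N/k)R$. So we can write $R=\frac{N}{k}V^{\mathsf T}V$ with $V\in\R^{k\times N}$ having orthonormal rows. Put $u_i=\sqrt{N/k}\,Ve_i$. Then $\norm{u_i}^2=R_{ii}=1$ and $\sum_i u_iu_i^{\mathsf T}=\frac{N}{k}VV^{\mathsf T}=\frac Nk I_k$. Hence $S=\{u_1,\dots,u_N\}$ with weights $d_i=k/N$ is admissible. Conversely, every unit-norm tight frame of $N$ vectors with frame constant $N/k$ gives such an $R$. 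Since $X\stackrel{d}{=}(\ip{g}{u_i})_i$, we get $\E\max_iX_i=w(S)$. This reduces $L_{N,k}$ to minimizing $w(S)$ over equal-weight unit-norm tight frames. Attainment follows from compactness of this frame set together with continuity of $w$ in $d_H$, using $|h_S-h_T|\le d_H(S,T)\norm{g}$.

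\emph{Part 1.} The inequality $\Delta\ge0$ gives $L_{N,k}\ge a_k$. For $N=mk$, repeat each $e_j$ exactly $m$ times. Then $\sum_i u_iu_i^{\mathsf T}=mI$, $S=\basis_k$, and $w=a_k$. The matrix $R$ is then block diagonal with all-ones blocks. For the converse, the equality case in \cref{sec:preliminaries} says that $w(S)=a_k$ forces the set $S$ to be an orthonormal basis $Q\basis_k$. So every $u_i$ is some $Qe_j$, and the frame identity $\sum_i u_iu_i^{\mathsf T}=\sum_j n_j Qe_je_j^{\mathsf T}Q^{\mathsf T}=\frac Nk I$ forces every multiplicity $n_j=m$. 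Thus $R_{ii'}=1$ when $u_i=u_{i'}$ and $0$ otherwise, which is the stated block form.

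\emph{Part 2.} By \cref{thm:stability}, it suffices to show $\delta(S)\ge 1/\sqrt{kN}$ when $k\nmid N$. Fix $Q$ and suppose $d_H(S,Q\basis_k)=\varepsilon$. Assign each $u_i$ to a nearest $Qe_j$. If $\varepsilon<1/\sqrt2$ this assignment is unique, since distinct basis vectors are $\sqrt2$ apart; and if $\varepsilon\ge 1/\sqrt2$ we are already done. Let $n_j$ be the number of $u_i$ assigned to $Qe_j$. Because $k\nmid N$, some $n_j\neq N/k$, and in fact $|n_j-N/k|\ge 1/k$. Now take the $(j,j)$ entry of the frame identity in the rotated basis:
\[
\frac Nk=\sum_i\ip{u_i}{Qe_j}^2 .
\]
Vectors assigned to $Qe_j$ satisfy $\ip{u_i}{Qe_j}^2\ge(1-\varepsilon^2/2)^2\ge1-\varepsilon^2$. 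Vectors assigned to other $Qe_l$ satisfy $\ip{u_i}{Qe_j}^2\le\norm{u_i-Qe_l}^2\le\varepsilon^2$. Hence
\[
\Bigl|\frac Nk-n_j\Bigr|\le \varepsilon^2\max(n_j,N-n_j)\le N\varepsilon^2 .
\]
Combined with $|n_j-N/k|\ge1/k$ this gives $\varepsilon^2\ge 1/(kN)$. Since $Q$ was arbitrary, $\delta(S)\ge1/\sqrt{kN}$, and the stability inequality yields $\Delta(S)\ge c_k/\sqrt{kN}$. The main obstacle is exactly this counting argument. One must check that the crude bounds hold uniformly, which is handled by the separate case $\varepsilon\ge1/\sqrt2$, where $1/\sqrt2\ge1/\sqrt{kN}$ trivially.

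\emph{Part 3.} The upper bound of \cref{thm:stability} gives $\Delta\le r_k\delta$, so it suffices to construct a frame with $\delta\le 2\sqrt k/\sqrt N$. Write $N=mk+s$ with $0<s<k$, so $m\ge 2k$. Take $m$ copies of each $e_j$, $j\ge 3$, and handle the $2m+s$ remaining vectors inside the plane $\mathrm{span}(e_1,e_2)$; they must form a unit-norm tight frame there with frame constant $N/k$. In that plane, perturb the copies of $e_1,e_2$ by small angles $\pm\theta$ so the frame operator becomes $\frac Nk I_2$. The discrepancy of order $s\le k$ has to be absorbed by roughly $N/k$ vectors. Each tilt contributes a change of about $\theta^2$ in the diagonal entries of the frame operator, which suggests $\theta^2\approx k^2/N$, i.e.\ $\theta\approx k/\sqrt N$. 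This is worse than the required $2\sqrt k/\sqrt N$. So instead I would spread the correction over all $k$ directions: tilt the copies of each $e_j$ slightly toward the other basis vectors, so that each of the $N$ vectors needs only $\theta^2\approx s/N\le k/N$. The condition $N\ge2k^2$ should guarantee that the required angles are small enough for the explicit construction to close, giving $\delta\le 2\sqrt{k/N}$. Finally $r_k\le\sqrt k$ gives the second inequality $2\sqrt k\,r_k/\sqrt N\le 2k/\sqrt N$, and Parts 2 and 3 together give $L_{N,k}-a_k\asymp_k N^{-1/2}$.
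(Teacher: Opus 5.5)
Your Parts 1 and 2 are correct and follow the paper. Part 1 is the equality case of \cref{prop:compact-equality} together with the multiplicity count. In Part 2 you re-derive the cluster-mass bound of \cref{lem:masses} inline, setting $|n_j-N/k|\le N\varepsilon^2$ against $|n_j-N/k|\ge 1/k$, which is exactly \cref{lem:integer}; your version uses only the one-sided distance, which is fine.

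The genuine gap is Part 3: you never exhibit a frame. The claim needs an explicit element of $\cF_{N,k}$ whose direction set lies within $2\sqrt k/\sqrt N$ of a basis. ``Tilt the copies of each $e_j$ slightly toward the other basis vectors \dots\ should guarantee that the construction closes'' only predicts the scale $\theta^2\lesssim k/N$. Two exact constraints are left unaddressed.
\begin{itemize}
\item \emph{Integrality.} Every diagonal entry of $\sum_i u_iu_i^{\mathsf T}$ must equal $m+r/k$. So you must say how many vectors sit near each axis, and how the fractional discrepancies are transferred so that they cancel exactly.
\item \emph{Off-diagonal terms.} Tilting $e_j$ toward $e_l$ creates an entry $\sin\theta\cos\theta$ in position $(j,l)$, and these entries must cancel.
\end{itemize}
Your abandoned planar attempt is in fact infeasible for $k\ge3$, not merely suboptimal. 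The $m$ untouched copies of $e_j$, $j\ge3$, give diagonal entry $m\neq N/k$; every axis has to take part in absorbing a fractional discrepancy.

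The missing device, used in \cref{prop:construction}, is a symmetric pair of tilted vectors.
\begin{itemize}
\item Put $m+1$ copies on $r$ ``high'' axes and $m$ copies on the $p=k-r$ ``low'' axes.
\item For each high $j$ and low $l$, replace $2P$ copies of $e_j$ by $P$ copies each of $\cos\theta\,e_j\pm\sin\theta\,e_l$.
\item The $\pm$ pairing cancels the cross terms. The net effect moves $2P\sin^2\theta$ from the $(j,j)$ entry to the $(l,l)$ entry.
\item Choosing $2P\sin^2\theta=1/k$ makes every high axis lose exactly $p/k$ and every low axis gain exactly $r/k$, so the frame is exactly tight.
\end{itemize}
Take $P=\lfloor (m+1)/(2p)\rfloor$. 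Because $m\ge2k$, this is at least $1$ and leaves $m+1-2Pp\ge0$ untouched copies. It then gives
\[
2\sin(\theta/2)\le\sqrt2\sin\theta=\frac{1}{\sqrt{kP}}\le\frac{2}{\sqrt{m+1}}\le\frac{2\sqrt k}{\sqrt N}.
\]
Without such an explicit construction, Part 3 is not established, and neither is the two-sided order $N^{-1/2}$.
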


The mechanism behind \cref{thm:arithmetic} is geometric and arithmetic. If a
frame is close to a basis, isotropy forces the total weight in each cluster to
equal $1$ up to a quadratic error in the cluster radius. For equal weights,
these masses are multiples of $k/N$. When $k\nmid N$, their discrepancy from
$1$ forces a cluster radius of order at least $N^{-1/2}$. Linear stability
converts this radius into the same order of excess Gaussian width.

In dimension two the perimeter formula permits a sharp expansion.

\begin{theorem}[Planar asymptotics]\label{thm:planar}
As odd integers $N$ tend to infinity,
\begin{equation}\label{eq:planar-main}
 L_{N,2}=\frac1{\sqrt\pi}+\frac1{2\sqrt{\pi N}}+O(N^{-1}).
\end{equation}
In particular,
$\lim_{m\to\infty}\sqrt{2m+1}(L_{2m+1,2}-a_2)=1/(2\sqrt\pi)$.
\end{theorem}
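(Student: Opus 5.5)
The plan is to convert \eqref{eq:L} for $k=2$ into a problem about points on the circle, and then to prove matching upper and lower bounds up to $O(N^{-1})$. Via the frame representation of \cref{sec:arithmetic}, a matrix $R\in\cR_{N,2}$ is the Gram matrix of unit vectors $u_j=(\cos\theta_j,\sin\theta_j)$ satisfying $\sum_j u_ju_j^{\mathsf T}=\tfrac N2 I_2$. This constraint is equivalent to
\[
 \sum_{j=1}^N e^{2i\theta_j}=0 .
\]
Moreover $\E\max_i X_i=w(S)$ with $S=\{u_j\}$. In the plane, averaging $h_S$ over directions (Cauchy's formula) gives $w(S)=\operatorname{per}(\conv S)/(2\sqrt{2\pi})$. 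As a check, a basis gives perimeter $2\sqrt2$ and hence $a_2=1/\sqrt\pi$. The problem therefore becomes: minimize the perimeter $P=\sum 2\sin(\gamma_\ell/2)$ of the inscribed polygon, where the $\gamma_\ell$ are the consecutive angular gaps. The claim \eqref{eq:planar-main} is equivalent to
\[
 \min P=2\sqrt2+\sqrt{2/N}+O(N^{-1}).
\]

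\emph{Localization.} By \cref{thm:arithmetic}(3) (with $N\ge 8$), $L_{N,2}-a_2=O(N^{-1/2})$. By \cref{thm:stability}, any minimizer then has $\delta(S)=O(N^{-1/2})$. After a rotation, every $\theta_j$ lies within $O(N^{-1/2})$ of $0$ or of $\pi/2$. The two clusters have $n_1$ and $n_2$ points, with $n_1+n_2=N$. Write the cluster angles as $\theta_j$ near $0$ and $\pi/2+\varphi_j$ near $\pi/2$. Expanding the frame condition to second order, the real part gives
\[
 n_1-n_2-2\sum\theta_j^2+2\sum\varphi_j^2=O(N\cdot N^{-3/2}) .
\]
The imaginary part forces $2\sum\theta_j-2\sum\varphi_j=O(N^{-1/2})$, i.e.\ the first moments balance. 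Since $N$ is odd, $|n_1-n_2|\ge1$. The second moments are $O(1)$, and with $\delta=O(N^{-1/2})$ this forces $|n_1-n_2|=1$. Say $n_2=n_1+1$; then $\sum\varphi_j^2\ge\tfrac12+O(N^{-1/2})$. On the perimeter side, the hull consists of two long chords plus short in-cluster chords. Writing the cluster angular extents as $[\alpha_-,\alpha_+]$ and $[\beta_-,\beta_+]$, a first-order expansion of $2\sin(\gamma/2)$ about $\gamma=\pi/2$ and $\gamma=3\pi/2$ gives
\[
 P-2\sqrt2=\bigl(1-\tfrac{1}{\sqrt2}\bigr)(\alpha_+-\alpha_-+\beta_+-\beta_-)+\text{(shift terms)}+O(\delta^2).
\]
I would carry this out carefully, keeping the shift terms, which depend on the signs of the extreme angles. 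The result is a lower bound that is linear in the extents, with error $O(N^{-1})$.

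\emph{Reduced extremal problem.} We must minimize a positively homogeneous, piecewise-linear functional of the extreme angles of the two clusters. The constraints are $\sum\varphi_j^2-\sum\theta_j^2=\tfrac12$ and the first-moment balance, over point configurations with $n_1\approx n_2\approx N/2$. For fixed extremes, the second moment is maximized by placing points only at the endpoints, so the optimal configurations live on at most three directions. This matches the asymmetric three-direction frame mentioned in the abstract. Solving the resulting finite-dimensional problem should give extremes of order $N^{-1/2}$ with optimal value exactly $\sqrt{2/N}$. I expect this optimization, and in particular tracking the correct first-order perimeter coefficients (including the shift of the long chords caused by the mean offset), to be the main obstacle. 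I would first test the ansatz: $n_1$ points at angle $0$; the $n_2$ points near $\pi/2$ split into two groups at angles $\pi/2+a$ and $\pi/2-b$ in proportions fixed by the first-moment balance. Optimizing over $a,b$ and the split should reproduce the constant.

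\emph{Upper bound and conclusion.} For the upper bound, I would take the optimal three-direction configuration from the reduced problem and perturb it by $O(N^{-1})$ so that it satisfies $\sum e^{2i\theta_j}=0$ exactly. This is possible by the implicit function theorem, since the constraint map has full rank at such configurations. Its perimeter is then $2\sqrt2+\sqrt{2/N}+O(N^{-1})$. Combining the two bounds and dividing by $2\sqrt{2\pi}$ gives \eqref{eq:planar-main}, and the limit statement follows immediately.
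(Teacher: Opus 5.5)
The lower bound, which carries the content of the theorem, is not proved. Your outline has the same architecture as the paper's proof: localization via \cref{thm:arithmetic} and \cref{thm:stability}, the second-order diagonal frame equation plus parity, a first-order perimeter expansion, an endpoint reduction, and a three-direction construction. Using the off-diagonal equation $n_1\bar\theta=n_2\bar\varphi+O(N^{-1/2})$ instead of \cref{lem:centers} is a legitimate planar shortcut: after rotating so that $\bar\theta=0$, it gives $\bar\varphi=O(N^{-3/2})$. However, you leave the ``shift terms'' uncomputed and only assert that the reduced problem ``should'' have value $\sqrt{2/N}$. Testing the three-direction ansatz exhibits one configuration, so it gives an upper bound and excludes nothing. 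The shift terms are not bookkeeping. They equal $\sqrt2(\beta_+-\alpha_-)$ up to $O(\delta^2)$, with no case analysis on signs, and they carry the asymmetry. With only the uniform coefficient $1-1/\sqrt2$, the same reduction would give $(2-\sqrt2)/\sqrt N$, the wrong constant, with a symmetric optimizer.

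The missing step is short. Let the centered, scaled angles $\sqrt N(\theta_j-\bar\theta)$ and $\sqrt N(\varphi_j-\bar\varphi)$ range over $[-A_1,B_1]$ and $[-A_2,B_2]$, with empirical variances $V_1,V_2$. Then $\sqrt N(P-2\sqrt2)=J+O(N^{-1/2})$, where $J=\kappa_+A_1+\kappa_-B_1+\kappa_-A_2+\kappa_+B_2$ and $\kappa_\pm=1\pm1/\sqrt2$. The side of a cluster facing the long $3\pi/2$ gap costs $\kappa_+$; the side facing the short gap costs $\kappa_-$.

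\begin{itemize}
\item The precise form of your endpoint remark is $V_j\le A_jB_j$: the empirical mean of $(z+A_j)(B_j-z)\ge0$ is $A_jB_j-V_j$.
\item Weighted AM--GM with $\kappa_+\kappa_-=\tfrac12$ then gives $J\ge\sqrt2(\sqrt{V_1}+\sqrt{V_2})\ge\sqrt{2|V_1-V_2|}$.
\item Your real-part equation gives $n_1-n_2=V_1-V_2+O(N^{-1/2})$, so parity yields $|V_1-V_2|\ge1-O(N^{-1/2})$.
\end{itemize}

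Equality forces $A_1=B_1=0$, $A_2B_2=1$ and $\kappa_-A_2=\kappa_+B_2$. Hence the endpoints are $-(1+\sqrt2)$ and $\sqrt2-1$, and your ansatz must be tuned to these values.

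Two smaller corrections are needed.

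\begin{itemize}
\item $\delta=O(N^{-1/2})$ only gives $|n_1-n_2|=O(1)$, not $|n_1-n_2|=1$. Use the inequality above rather than the equality $\sum\varphi_j^2-\sum\theta_j^2=\tfrac12$. That $|n_1-n_2|=1$ for minimizers follows afterwards from $J\ge\sqrt{2|n_1-n_2|}-o(1)$.
\item The implicit-function step must be uniform in $N$. The derivatives of $\sum_je^{2\mathrm i\theta_j}$ with respect to the two moving groups are at angle $\asymp N^{-1/2}$, so the Jacobian degenerates as $N\to\infty$. A Newton-type estimate can handle this. The paper avoids it by solving exactly: it chooses $d$ with $4AB\sin^2d=N$, so that $|A+Be^{2\mathrm id}|=m$, and rotates the pair by $-\tfrac12\arg(A+Be^{2\mathrm id})$.
\end{itemize}
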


The construction proving this theorem has three distinct directions and
unequal splitting multiplicities. The symmetric construction used for
\cref{thm:arithmetic} has a strictly larger leading constant.
\Cref{cor:planar-rigidity} describes every asymptotically minimizing
planar sequence. For higher rank, \cref{sec:tangent} gives limiting
moment constraints and an attained finite-dimensional lower bound.

\subsection{Relation to earlier work}

The geometric Brascamp--Lieb inequality originates in the work of Brascamp and
Lieb \cite{BrascampLieb76} and its geometric normalization by Ball \cite{Ball89}.
Barthe's work \cite{Barthe98reverse} and Valdimarsson's characterization of
optimizers \cite{Valdimarsson08} provide equality theory. We use the stochastic
bound \eqref{eq:dom-intro} as a direct application of that inequality, rather
than as the principal contribution. The tail argument below gives a
self-contained equality proof without invoking the general classification of
optimizers.

Mean-width inequalities for isotropic measures with additional assumptions
have been studied extensively. Centered measures lead to simplex extremizers
\cite{Barthe98simplex,Schmuck99,LiLeng12}; quantitative results are given by
B\"or\"oczky, Fodor, and Hug \cite{BFH21}. For even isotropic measures, the
corresponding extremizer is a cross-polytope, and recent stability estimates
appear in \cite{BFH25}. Our admissibility condition imposes neither centering
nor evenness. The extremizing set has $k$ points, and the stability estimate is
in the Hausdorff distance between sets of directions. Thus the hypotheses and
extremizers differ from those in the centered and even problems.

Recent Gaussian comparison results of Mulgund are also relevant.
Under $R\succeq\mathbf1\mathbf1^{\mathsf T}/n$, the maximum of a centered Gaussian
vector with correlation matrix $R$ is stochastically dominated by the
maximum of $n$ independent standard normals \cite{Mulgund26weak}.
For arbitrary correlation matrices, the regular simplex gives a
stochastic upper bound \cite{Mulgund26simplex}. These comparisons address
upper extremality. Here we minimize the expected maximum under the fixed
spectrum in \eqref{eq:matrix-class}. Our isotropy assumption gives a lower
comparison with $k$ independent normals. The linear stability estimate,
divisibility gap, and planar coefficient quantify this lower extremum.

The frame-potential characterization of tight frames is classical
\cite{BenedettoFickus03}. On the present feasible class that potential is
constant, equal to $N^2/k$, so it does not distinguish the widths being
minimized. Prescribed-spectrum frame constructions \cite{CahillEtAl13}
give existence, while sparse energy-transfer methods such as Spectral
Tetris \cite{CasazzaWoodland14} provide related construction techniques.
Our construction controls the distance from a basis as well as the frame
operator. Gaussian comparison bounds in terms of covariance differences
\cite{CCK15} address a different constraint from fixing the spectrum.

The results developed here are the linear estimate \eqref{eq:stability},
the divisibility gap, and the sharp planar constant in
\eqref{eq:planar-main}. The cluster-center estimate removes first-order
motion of the cluster means. The planar proof then combines a perimeter
expansion with the variance forced by the odd multiplicity defect.

\section{Stochastic domination, equality, and compactness}\label{sec:preliminaries}

We first record the finite version of the basic comparison.

\begin{proposition}\label{prop:finite}
Let $u_i\in S^{k-1}$ and $d_i>0$, for $1\le i\le N$, satisfy
$\sum_i d_i u_i u_i^{\mathsf T}=I_k$. If
$M=\max_i\ip{g}{u_i}$, then
\begin{equation}\label{eq:finite-dom}
 \Prob(M\le t)\le\Phi(t)^k\qquad(t\in\R).
\end{equation}
Moreover, $\E M=a_k$ if and only if the distinct values among the $u_i$ form
an orthonormal basis. In that case, the weights on each basis vector sum to $1$.
\end{proposition}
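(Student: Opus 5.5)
The domination bound \eqref{eq:finite-dom} will follow from the geometric Brascamp--Lieb inequality, and the equality statement from a tail argument comparing the two distribution functions at $t\to-\infty$.

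For \eqref{eq:finite-dom}, set $f_i=\ind_{(-\infty,t]}$. The event $\{M\le t\}$ is $\{\ip{g}{u_i}\le t\ \forall i\}$, so its probability is $\int_{\R^k}\prod_i f_i(\ip{x}{u_i})\,d\gamma_k(x)$. The Gaussian form of the geometric Brascamp--Lieb inequality gives, whenever $\sum_i d_iu_iu_i^{\mathsf T}=I_k$,
\[
 \int\prod_i F_i(\ip{x}{u_i})^{d_i}\,d\gamma_k\le\prod_i\Bigl(\int F_i\,d\gamma_1\Bigr)^{d_i}.
\]
Take $F_i=f_i$, noting that $f_i^{d_i}=f_i$. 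The right side then becomes $\Phi(t)^{\sum d_i}=\Phi(t)^k$, since taking traces gives $\sum_i d_i=k$. Integrating tails, $\E M=\int_0^\infty\Prob(M>t)\,dt-\int_{-\infty}^0\Prob(M\le t)\,dt$, and likewise for $\max_j g_j$. Domination therefore yields $\E M\ge a_k$, with equality if and only if $\Prob(M\le t)=\Phi(t)^k$ for all $t$, because both distribution functions are continuous.

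For the equality case, the converse direction is immediate. If the distinct $u_i$ form an orthonormal basis $\{v_1,\dots,v_k\}$, then $M=\max_j\ip{g}{v_j}$ is a maximum of $k$ i.i.d.\ standard normals. Isotropy then reads $\sum_j(\sum_{u_i=v_j}d_i)v_jv_j^{\mathsf T}=I$, which forces each group weight to be $1$.

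For the forward direction, suppose $\Prob(M\le t)=\Phi(t)^k$ for all $t$. Let $U=\{u_i\}$ (distinct values) and $C=\{x:\ip{x}{u}\le 1\ \forall u\in U\}$. As $t\to-\infty$, $\Prob(M\le t)=\Prob(g\in tC')$ with $C'=\{x:\ip{x}{u}\ge1\ \forall u\}$, i.e.\ $\Prob(-g\in |t|\,C'')$ for the polyhedron $C''=\{y:\ip{y}{u}\ge 1\ \forall u\in U\}$ after the sign change. Laplace asymptotics give $\log\Prob=-\tfrac{t^2}{2}\min_{y\in C''}\norm{y}^2(1+o(1))$. Comparing with $\log\Phi(t)^k\sim -kt^2/2$ forces $\min_{C''}\norm{y}^2=k$. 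I expect this identification to be the main obstacle: one must check that an isotropic family with $\min_{C''}\norm{y}^2=k$ is a basis.

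I would handle it by convex duality. By LP duality, $\min\{\norm{y}^2:\ip{y}{u_i}\ge1\}=\max\{(\sum\lambda_i)^2/\norm{\sum\lambda_iu_i}^2:\lambda\ge0\}$, and it suffices to exhibit a feasible $y$ with $\norm{y}^2<k$ unless $U$ is a basis. A cleaner route is to take $y$ with $\ip{y}{u_i}\ge1$ and apply isotropy: $\norm{y}^2=\sum_i d_i\ip{y}{u_i}^2\ge\sum_i d_i=k$. Hence every feasible $y$ has $\norm{y}^2\ge k$, consistent with domination, and equality needs a $y$ with $\ip{y}{u_i}=1$ for all $i$ attaining the minimum. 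The task is thus to show that the minimizer $y^*$ equals $1$ on every $u_i$ only in the basis case.

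By KKT, $y^*=\sum_i\lambda_iu_i$ with $\lambda_i\ge0$, and we have $\ip{y^*}{u_i}=1$ for all $i$. This gives $\norm{y^*}^2=\sum\lambda_i=k$, and also $y^*=\sum d_i\ip{y^*}{u_i}u_i=\sum d_iu_i$. If this tightness fails, the leading exponential rate already separates the two sides. If it holds, I would pass to the next-order term in the Laplace expansion: the polynomial prefactor is governed by the number of active constraints $u_i$ at $y^*$ and their linear independence, and it must match $|t|^{-k}$ from $\Phi(t)^k$. That forces exactly $k$ distinct active directions that are linearly independent. With $k$ distinct independent $u_j$ carrying group weights $D_j$ and $\sum_j D_ju_ju_j^{\mathsf T}=I$, we get $u_j^{\mathsf T}(\sum D_lu_lu_l^{\mathsf T})^{-1}u_j=1/D_j\cdot$(dual norm). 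A short computation using the dual basis then shows the $u_j$ are orthonormal with $D_j=1$.

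If the prefactor analysis proves messy, my fallback is a first-order perturbation. Equality of distribution functions for all $t$ gives equality of densities, and comparing the densities near $t\to-\infty$, or via the Brascamp--Lieb equality characterization of \cite{Valdimarsson08}, yields the same conclusion.
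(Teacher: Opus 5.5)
Your domination step, the reduction of $\E M=a_k$ to equality of laws, the converse, and the first-order lower-tail step giving $\ip{y^*}{u_i}=1$ for all $i$ with $\norm{y^*}^2=k$ are all correct. Your Gaussian form of Brascamp--Lieb is equivalent to the paper's Lebesgue form with $f_i(s)=e^{-s^2/2}\ind_{(-\infty,t]}(s)$. The argument breaks at the next step. The claim that the polynomial prefactor in the lower tail detects the number of distinct directions is false.

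Once every constraint is active at $y^*$, the polyhedron is exactly $C''=y^*+T$ with $T=\{w:\ip{w}{u_i}\ge0\ \forall i\}$. Moreover $\ip{y^*}{w}=\sum_i d_i\ip{u_i}{w}>0$ on $T\setminus\{0\}$, because the $u_i$ span $\R^k$. Substituting $w\mapsto w/s$ gives
\[
 \Prob(M\le -s)=(2\pi)^{-k/2}e^{-ks^2/2}s^{-k}\Bigl(\int_Te^{-\ip{y^*}{w}}\,dw+o(1)\Bigr)\qquad(s\to\infty).
\]
So the power is $s^{-k}$ for every isotropic family in this situation. It is governed by the dimension of the span of the active constraints, which isotropy already forces to be $k$, not by how many constraints there are. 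For a concrete instance, take $k=3$ and four unit vectors equally spaced on the circle $\{u\in S^2:\ip{u}{(1,1,1)}=1\}$, each of weight $3/4$. This family is isotropic and is not a basis. All its constraints are active at $y^*=(1,1,1)$, and its lower tail has the same exponential rate and the same power $s^{-3}$ as $\Phi(-s)^3$.

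Only the constant $c_T=\int_Te^{-\ip{y^*}{w}}\,dw$ can separate the two sides. Since $\sum_id_iu_i=y^*$, the bound $c_T\le1$ is exactly geometric Brascamp--Lieb applied to $f_i(s)=\ind_{[0,\infty)}(s)e^{-s}$. Proving $c_T<1$ away from the bases is therefore again an equality problem for Brascamp--Lieb, so the argument is circular. Your fallbacks either meet the same obstruction (densities at $-\infty$) or import Valdimarsson's classification of optimizers, which the paper deliberately avoids.

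The missing idea is to use the upper tail. Take distinct directions $v_j\ne v_l$ with $\rho=\ip{v_j}{v_l}$. If $-1<\rho<1$, the event $\{\ip{g}{v_j}>t,\ \ip{g}{v_l}>t\}$ forces a variable of variance $2(1+\rho)$ to exceed $2t$. Its probability is then at most $\Phibar(\sqrt{2/(1+\rho)}\,t)=o(\Phibar(t))$. If $\rho=-1$, the event is empty for $t\ge0$. Bonferroni then gives $\Prob(M>t)=n\Phibar(t)(1+o(1))$, with $n$ the number of distinct directions. Since $1-\Phi(t)^k\sim k\Phibar(t)$, equality of laws forces $n=k$.

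Finally, the matrix $V$ with columns $\sqrt{D_j}v_j$ is square with $VV^{\mathsf T}=I_k$, so $V^{\mathsf T}V=I_k$. This gives $D_j=1$ and orthogonality directly, and replaces your unclear dual-basis computation.
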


\begin{proof}
For nonnegative integrable functions $f_i:\R\to[0,\infty)$, the geometric
Brascamp--Lieb inequality \cite{Ball89,Barthe98reverse} states that
\begin{equation}\label{eq:BL}
 \int_{\R^k}\prod_i f_i(\ip{x}{u_i})^{d_i}\,dx
 \le\prod_i\left(\int_\R f_i(s)\,ds\right)^{d_i}.
\end{equation}
Choose $f_i(s)=e^{-s^2/2}\ind_{(-\infty,t]}(s)$. Isotropy and
$\sum_i d_i=k$ turn the left side into
$(2\pi)^{k/2}\Prob(M\le t)$ and the right side into
$(2\pi)^{k/2}\Phi(t)^k$. This proves \eqref{eq:finite-dom}.

Let $F$ be the distribution function of $M$. Integrability and the layer-cake
identity give
\begin{equation}\label{eq:gap-integral}
 \E M-a_k=\int_\R\bigl(\Phi(t)^k-F(t)\bigr)\,dt.
\end{equation}
The integrand is nonnegative and right-continuous. If the expectations are
equal, it vanishes everywhere. A strictly positive value, by right-continuity,
would give a positive integral over an interval to its right. Thus equality of
expectations implies equality of the two distributions.

Let $v_1,\ldots,v_n$ be the distinct directions. For $j\ne l$ write
$\rho_{jl}=\ip{v_j}{v_l}<1$ and $A_j(t)=\{\ip{g}{v_j}>t\}$.
If $-1<\rho_{jl}<1$, then
\begin{equation}\label{eq:pair-tail}
 \Prob(A_j(t)\cap A_l(t))
 \le\Phibar\!\left(\sqrt{\frac{2}{1+\rho_{jl}}}\,t\right)
 =o(\Phibar(t))\qquad(t\to\infty).
\end{equation}
Indeed the intersection implies that the sum of the two normal variables is
larger than $2t$, while its variance is $2(1+\rho_{jl})$; the final comparison
follows from $\Phibar(t)\sim e^{-t^2/2}/(\sqrt{2\pi}\,t)$. If
$\rho_{jl}=-1$, the intersection is empty for $t\ge0$. Bonferroni's inequalities
and finiteness of $n$ now give
\[
 \Prob(M>t)=n\Phibar(t)(1+o(1)).
\]
Equality of laws gives the same expression with $n$ replaced by $k$, because
$1-\Phi(t)^k\sim k\Phibar(t)$. Hence $n=k$.

Group the weights on each distinct vector into $D_j>0$, and let $V$ have
columns $\sqrt{D_j}v_j$. Then $VV^{\mathsf T}=I_k$. Since $V$ is square,
$V^{\mathsf T}V=I_k$ as well. Its diagonal entries imply $D_j=1$, and its
off-diagonal entries imply that the $v_j$ are orthogonal. Conversely, an
orthonormal basis gives $k$ independent standard normal coordinates, and
repetitions do not change their maximum.
\end{proof}

\begin{proposition}\label{prop:compact-equality}
For every $S\in\cA_k$, \eqref{eq:dom-intro} holds. Moreover,
\begin{equation}\label{eq:equality}
 \Delta(S)=0\quad\Longleftrightarrow\quad S=Q\basis_k
 \text{ for some }Q\in O(k).
\end{equation}
\end{proposition}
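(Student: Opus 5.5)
We pass from the finite statement \cref{prop:finite} to a general admissible $S$ by approximating the isotropic measure $\mu$ with finite weighted isotropic families supported in $S$. The first step is a Carathéodory-type reduction. The map $u\mapsto uu^{\mathsf T}$ sends $S$ into the space of symmetric matrices, which has dimension $k(k+1)/2$. The matrix $I_k=\int uu^{\mathsf T}\,d\mu$ lies in $\mu(S)=k$ times the closed convex hull of $\{uu^{\mathsf T}:u\in S\}$. Since $S$ is compact, this hull is already closed, so $I_k=\sum_i d_iu_iu_i^{\mathsf T}$ for finitely many $u_i\in S$ and $d_i>0$.

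For any such finite subfamily $F=\{u_i\}\subset S$ we have $h_F\le h_S$ pointwise. Hence $\Prob\{h_S(g)\le t\}\le\Prob\{\max_i\ip{g}{u_i}\le t\}\le\Phi(t)^k$ by \eqref{eq:finite-dom}, which proves \eqref{eq:dom-intro}. The layer-cake identity then gives $\Delta(S)\ge0$, exactly as in \eqref{eq:gap-integral}.

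For the equality case, the direction ``$\Leftarrow$'' is immediate: $h_{Q\basis_k}(g)=\max_j\ip{Q^{\mathsf T}g}{e_j}$ and $Q^{\mathsf T}g$ is standard Gaussian, so $w(Q\basis_k)=a_k$.

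For ``$\Rightarrow$'', suppose $\Delta(S)=0$ and let $F$ be any finite isotropic subfamily as above. Then
\[
 a_k\le \E\max_{u\in F}\ip{g}{u}\le w(S)=a_k,
\]
so equality holds. By \cref{prop:finite}, the distinct elements of $F$ form an orthonormal basis $B_F=Q\basis_k$. It remains to show $S=B_F$.

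Suppose some $v\in S\setminus B_F$ exists. Set $T=B_F\cup\{v\}$. Then $h_T\ge h_{B_F}$, with strict inequality on the open set $\{x:\ip{x}{v}>\max_j\ip{x}{Qe_j}\}$. This set is nonempty: it contains $x=v$, since $\ip{v}{Qe_j}<1$ for each $j$ because $v\ne Qe_j$ and all vectors are unit vectors. An open nonempty set has positive Gaussian measure, so
\[
 w(S)\ge\E h_T(g)>\E h_{B_F}(g)=a_k,
\]
a contradiction. Hence $S\subseteq B_F$. Since $B_F=F\subseteq S$, we conclude $S=B_F=Q\basis_k$.

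The only step needing care is the Carathéodory reduction. One must check that isotropy of a possibly non-atomic $\mu$ yields a finite isotropic family with points inside $S$, as opposed to merely inside $\supp\mu$ or in some closure outside $S$. This works because the convex hull of the compact set $\{uu^{\mathsf T}:u\in S\}$ in finite dimensions is compact, and the normalized measure $\mu/k$ has barycenter $I_k/k$ in it. Equivalently, one could approximate $\mu$ weakly by atomic measures and pass to the limit in the distribution functions, but the Carathéodory route avoids any limiting argument.
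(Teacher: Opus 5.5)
Your proof is correct and follows essentially the same route as the paper. You use a Carathéodory reduction of the barycenter $I_k/k$ of $uu^{\mathsf T}$ under $\mu/k$ to a finite positively weighted isotropic subfamily of $S$, apply \cref{prop:finite} together with the pointwise bound $h_F\le h_S$, and then observe that any point $v\in S$ off the resulting basis makes the support function strictly larger on an open neighborhood of $v$, which contradicts $w(S)=a_k$.
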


\begin{proof}
If $\mu$ witnesses admissibility, then $\mu/k$ is a probability measure and
$I_k/k$ is the barycenter of $uu^{\mathsf T}$ under this measure. The image
of $S$ is compact, so its convex hull contains this barycenter. The symmetric
trace-one matrices lie in an affine space of dimension $k(k+1)/2-1$.
Carath\'eodory's theorem therefore supplies vectors $u_1,\ldots,u_s\in S$,
where $s\le k(k+1)/2$, and positive weights $d_i$ such that
$\sum_i d_i u_i u_i^{\mathsf T}=I_k$. Apply \cref{prop:finite} to
$T=\{u_1,\ldots,u_s\}$ and use $h_S\ge h_T$.

If $w(S)=a_k$, then $a_k\le w(T)\le w(S)$, so $T$ is an orthonormal basis.
If $u\in S\setminus T$, then
$\ip{u}{u}=1>\max_{v\in T}\ip{u}{v}$. By continuity the strict inequality
$h_S(x)>h_T(x)$ holds on an open neighborhood of $u$. This neighborhood has
positive Gaussian measure, and $h_S\ge h_T$ everywhere, contradicting
$w(S)=w(T)$. Thus $S=T$. The converse is immediate.
\end{proof}

\begin{lemma}\label{lem:compactness}
The class $\cA_k$ is compact in the Hausdorff metric. The functions $\delta$
and $w$ are continuous on it, and
\begin{equation}\label{eq:Lipschitz}
 |w(S)-w(T)|\le r_k d_H(S,T).
\end{equation}
For every $\eta>0$ such that
$\{S\in\cA_k:\delta(S)\ge\eta\}$ is nonempty, the minimum of $\Delta$
on this set is strictly positive.
\end{lemma}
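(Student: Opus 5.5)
We prove the four assertions in the order Lipschitz bound, compactness, continuity, positivity, since each uses the previous ones.

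\emph{Lipschitz bound.} Let $S,T\subset S^{k-1}$ be compact, and put $\epsilon=d_H(S,T)$. For $x\in\R^k$ and $u\in S$ there is $v\in T$ with $\norm{u-v}\le\epsilon$. Then
\[
 \ip{x}{u}\le\ip{x}{v}+\norm{x}\epsilon\le h_T(x)+\epsilon\norm{x}.
\]
Taking the supremum over $u\in S$ and symmetrizing gives $|h_S(x)-h_T(x)|\le\epsilon\norm{x}$. Taking expectations at $x=g$ yields \eqref{eq:Lipschitz}, which in particular shows that $w$ is continuous.

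\emph{Continuity of $\delta$.} For each fixed $Q\in O(k)$, the map $S\mapsto d_H(S,Q\basis_k)$ is $1$-Lipschitz by the triangle inequality for $d_H$. A minimum of $1$-Lipschitz functions is again $1$-Lipschitz, so $\delta$ is continuous.

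\emph{Compactness of $\cA_k$.} The nonempty compact subsets of $S^{k-1}$ form a compact space under $d_H$ by Blaschke selection. It therefore suffices to show that $\cA_k$ is closed.
\begin{itemize}
 \item[(i)] Let $S_n\in\cA_k$ with $S_n\to S$, and let $\mu_n$ be witnessing measures. Each $\mu_n$ is supported in $S^{k-1}$ and has mass $k$.
 \item[(ii)] By Prokhorov (equivalently, Banach--Alaoglu on $C(S^{k-1})^*$), a subsequence converges weakly to a measure $\mu$ of mass $k$.
 \item[(iii)] The isotropy relation \eqref{eq:isotropy} involves the continuous test functions $u\mapsto u_iu_j$, so it passes to the limit.
 \item[(iv)] For the support, note that $\supp\mu_n\subset S_n$ and $S_n\to S$. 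For any $\epsilon>0$, the closed $\epsilon$-neighborhood $S^\epsilon$ of $S$ contains $S_n$ for large $n$. The portmanteau theorem for closed sets gives $\mu(S^\epsilon)\ge\limsup_n\mu_n(S^\epsilon)=k$. Letting $\epsilon\downarrow0$ gives $\mu(S)=k$, so $\mu$ is supported in $S$.
\end{itemize}
Hence $S\in\cA_k$, and $\cA_k$ is closed. Alternatively, one could avoid measures by using the Carath\'eodory reduction from \cref{prop:compact-equality}: pass to at most $k(k+1)/2$ points with bounded weights and extract convergent subsequences of points and weights. The weak-limit argument is cleaner, so we use it.

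\emph{Positivity.} The set $K_\eta=\{S\in\cA_k:\delta(S)\ge\eta\}$ is closed in the compact space $\cA_k$, because $\delta$ is continuous; hence $K_\eta$ is compact. By the Lipschitz bound, $\Delta=w-a_k$ is continuous, so it attains its minimum on the nonempty set $K_\eta$ at some $S_*$. We have $\Delta(S_*)\ge0$ by \eqref{eq:dom-intro}, as established in \cref{prop:compact-equality}. If $\Delta(S_*)=0$, then \eqref{eq:equality} gives $S_*=Q\basis_k$ for some $Q\in O(k)$, so $\delta(S_*)=0<\eta$, a contradiction. Therefore the minimum is strictly positive.

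The only step that needs care is the support claim (iv) in the closedness argument. This is exactly where the convention that $\supp\mu$ need not equal $S$ is used, and the portmanteau inequality on closed neighborhoods handles it.
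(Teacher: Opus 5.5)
Your proof is correct and follows the paper's argument essentially step for step. The shared steps are the pointwise bound $|h_S(x)-h_T(x)|\le\norm{x}\,d_H(S,T)$, the $1$-Lipschitz property of $\delta$, closedness of $\cA_k$ via a weak limit of the witnessing measures, and positivity from compactness plus the equality case \eqref{eq:equality}. The only difference is cosmetic and lies in the support step: the paper uses $\int\dist(u,S)\,d\mu_n(u)\le k\,d_H(S_n,S)\to0$ together with continuity of $\dist(\cdot,S)$, where you use the portmanteau inequality on closed $\epsilon$-neighborhoods, and the two are equivalent.
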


\begin{proof}
The space of nonempty compact subsets of the compact sphere is compact in
$d_H$. To see that $\cA_k$ is closed, suppose $S_n\to S$ with isotropic
measures $\mu_n$ supported in $S_n$. All these measures have mass $k$.
After passing to a subsequence they converge weakly to a measure $\mu$ of
mass $k$. Since
\[
 \int\dist(u,S)\,d\mu_n(u)\le k d_H(S_n,S)\longrightarrow0,
\]
weak convergence shows that $\mu$ is supported in $S$. The coordinate
functions $u\mapsto u_i u_j$ are continuous, so their integrals retain
\eqref{eq:isotropy}. Thus $S\in\cA_k$.

For every $x$, the definition of $d_H$ implies
$|h_S(x)-h_T(x)|\le\norm{x}d_H(S,T)$. Taking expectations proves
\eqref{eq:Lipschitz}. The triangle inequality also gives
$|\delta(S)-\delta(T)|\le d_H(S,T)$. The final assertion follows from
compactness, continuity, and \cref{prop:compact-equality}.
\end{proof}

\section{Second-order control of cluster centers}\label{sec:clusters}

We next establish the geometric estimate used in the local stability proof.
Unlike a bound on arbitrary representatives, it places the weighted centers
within a quadratic error of an orthonormal basis.

\begin{lemma}[Cluster masses]\label{lem:masses}
Suppose $S\in\cA_k$ and $d_H(S,\basis_k)\le\varepsilon\le1/2$.
Then the sets
\[
 S_j=S\cap\{u:\norm{u-e_j}\le\varepsilon\},\qquad 1\le j\le k,
\]
are nonempty, pairwise disjoint compact sets with union $S$. For every
isotropic measure $\mu$ supported in $S$, their masses $D_j=\mu(S_j)$ satisfy
\begin{equation}\label{eq:mass-bound}
 1-k\varepsilon^2\le D_j\le1+2\varepsilon^2,
 \qquad |D_j-1|\le k\varepsilon^2.
\end{equation}
\end{lemma}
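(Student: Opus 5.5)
We first settle the set-theoretic claims and then read off the mass bounds from the diagonal of the isotropy identity \eqref{eq:isotropy}.

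\emph{The sets $S_j$.} Since $d_H(S,\basis_k)\le\varepsilon$, every $e_j$ lies within $\varepsilon$ of some point of $S$, so $S_j\neq\emptyset$. Likewise every $u\in S$ lies within $\varepsilon$ of some $e_j$, so the $S_j$ cover $S$. They are pairwise disjoint because $\norm{e_i-e_j}=\sqrt2>2\varepsilon$ when $\varepsilon\le1/2$. Each $S_j$ is the intersection of $S$ with a closed ball, hence compact.

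\emph{Coordinates on a cluster.} Fix $u\in S_j$. From $\norm{u-e_j}^2=2-2u_j\le\varepsilon^2$ we get
\[
 u_j^2\ge(1-\varepsilon^2/2)^2\ge1-\varepsilon^2 .
\]
Since $\norm{u}=1$, this gives $\sum_{l\ne j}u_l^2=1-u_j^2\le\varepsilon^2$, and in particular $u_l^2\le\varepsilon^2$ for every $l\ne j$.

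\emph{Upper bound.} Take the $(j,j)$ entry of \eqref{eq:isotropy}:
\[
 1=\int_S u_j^2\,d\mu(u)\ge\int_{S_j}u_j^2\,d\mu(u)\ge(1-\varepsilon^2)D_j .
\]
Hence $D_j\le(1-\varepsilon^2)^{-1}\le1+2\varepsilon^2$, where the last step uses $\varepsilon^2\le1/4$.

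\emph{Lower bound.} Split the same diagonal entry over the clusters, using $u_j^2\le1$ on $S_j$ and $u_j^2\le\varepsilon^2$ on $S_l$ for $l\ne j$:
\[
 1=\int_S u_j^2\,d\mu(u)\le D_j+\varepsilon^2\sum_{l\ne j}D_l=D_j+\varepsilon^2(k-D_j).
\]
Here we used $\sum_l D_l=\mu(S)=k$, which holds because the $S_j$ partition $S$. Rearranging,
\[
 D_j\ge\frac{1-k\varepsilon^2}{1-\varepsilon^2}\ge1-k\varepsilon^2 .
\]
The last inequality is clear when $1-k\varepsilon^2\ge0$, since the denominator is at most $1$. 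When $1-k\varepsilon^2<0$ it is trivial, because $D_j\ge0$.

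\emph{Combined bound.} Since $k\ge2$, the upper bound gives $D_j-1\le2\varepsilon^2\le k\varepsilon^2$. Together with the lower bound this yields $|D_j-1|\le k\varepsilon^2$.

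No step is a serious obstacle. The only point needing care is the lower bound: the off-cluster contributions must be controlled through the total mass $\mu(S)=k$, which is what makes the error term $k\varepsilon^2$ rather than something depending on the number of points in $S$.
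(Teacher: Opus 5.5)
Your proof is correct and follows the paper's argument. Both read off the $j$th diagonal entry of \eqref{eq:isotropy}, bound $u_j^2$ from below on $S_j$ and by $\varepsilon^2$ on the other clusters, and use the total mass $\mu(S)=k$ to control the off-cluster contribution; the differences are cosmetic (you use $u_j^2\ge1-\varepsilon^2$ in place of $(1-\varepsilon^2/2)^2$, and the exact identity $\sum_{l\ne j}D_l=k-D_j$ in place of the bound by $k$).
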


\begin{proof}
The covering and nonemptiness follow from the two parts of the Hausdorff
bound. The balls are disjoint because distinct basis vectors are at distance
$\sqrt2>2\varepsilon$. For $u\in S_j$,
$\ip{u}{e_j}=1-\norm{u-e_j}^2/2\ge1-\varepsilon^2/2$. For
$u\in S_l$ with $l\ne j$, $|\ip{u}{e_j}|\le\varepsilon$. Applying the
$j$th diagonal entry of \eqref{eq:isotropy} gives
\[
 D_j(1-\varepsilon^2/2)^2\le1\le D_j+k\varepsilon^2.
\]
For $0\le\varepsilon\le1/2$, one has
$(1-\varepsilon^2/2)^{-2}\le1+2\varepsilon^2$. The two bounds and $k\ge2$
give \eqref{eq:mass-bound}.
\end{proof}

\begin{lemma}[Cluster centers]\label{lem:centers}
Under the assumptions of \cref{lem:masses}, suppose also that
$k\varepsilon^2\le1/2$. Then $D_j>0$, and the centers
\[
 m_j=\frac1{D_j}\int_{S_j}u\,d\mu(u)
\]
admit an orthonormal basis $q_1,\ldots,q_k$ satisfying
\begin{equation}\label{eq:center-bound}
 \max_j\norm{m_j-q_j}\le2k\varepsilon^2.
\end{equation}
Furthermore,
\begin{equation}\label{eq:cluster-radius}
 \sup_{u\in S_j}\norm{u-q_j}\le2\varepsilon+2k\varepsilon^2.
\end{equation}
\end{lemma}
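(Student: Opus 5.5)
The plan is to encode the cluster centers as the columns of a $k\times k$ matrix, show that this matrix is a small positive semidefinite perturbation of an orthogonal matrix, and take the orthonormal basis $q_1,\ldots,q_k$ from its polar decomposition.

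\emph{Positivity and setup.} By \cref{lem:masses} and $k\varepsilon^2\le1/2$ we get $D_j\ge1-k\varepsilon^2\ge1/2>0$, so the centers $m_j$ are defined. Each $m_j$ is a $\mu$-average of points in the closed convex ball $\{\norm{u-e_j}\le\varepsilon\}$. Hence $\norm{m_j-e_j}\le\varepsilon$ and $\ip{m_j}{e_j}\ge1-\varepsilon^2/2$, the latter by averaging the inequality from the proof of \cref{lem:masses}. Let $W$ be the $k\times k$ matrix with columns $w_j=\sqrt{D_j}\,m_j$.

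\emph{Isotropy as a covariance decomposition.} Split \eqref{eq:isotropy} over the disjoint clusters and expand about each center:
\[
 \int_{S_j}uu^{\mathsf T}\,d\mu=D_jm_jm_j^{\mathsf T}+C_j,\qquad
 C_j=\int_{S_j}(u-m_j)(u-m_j)^{\mathsf T}\,d\mu\succeq0.
\]
Summing over $j$ gives $WW^{\mathsf T}=I_k-C$ with $C=\sum_jC_j\succeq0$. Since $\norm{u}=1$ on $S$, we have $\tr C_j=D_j(1-\norm{m_j}^2)$. Moreover $1-\norm{m_j}^2\le1-\ip{m_j}{e_j}^2\le1-(1-\varepsilon^2/2)^2\le\varepsilon^2$. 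Using $\sum_jD_j=\mu(S)=k$, this gives
\[
 \norm{C}_{\op}\le\tr C\le k\varepsilon^2\le1/2 .
\]
This uniform trace bound, obtained by pairing the unit-norm constraint with the total mass $k$ rather than estimating each cluster separately, is the crux: it is what makes the error quadratic rather than linear in $\varepsilon$.

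\emph{Polar decomposition.} Because $I-C\succeq\tfrac12I$, the matrix $W$ is invertible. Write $W=(I-C)^{1/2}Q$ with $Q\in O(k)$, and let $q_j$ be the $j$th column of $Q$. The eigenvalues $\lambda$ of $C$ lie in $[0,1]$, and $1-\sqrt{1-\lambda}\le\lambda$ there, so
\[
 \norm{w_j-q_j}\le\norm{I-(I-C)^{1/2}}_{\op}\le\norm{C}_{\op}\le k\varepsilon^2 .
\]
Also $\norm{m_j}\le1$ and $|1-\sqrt{D_j}|\le|1-D_j|\le k\varepsilon^2$ by \eqref{eq:mass-bound}, so $\norm{m_j-w_j}\le k\varepsilon^2$. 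The triangle inequality then gives \eqref{eq:center-bound} with constant $2k\varepsilon^2$.

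\emph{Cluster radius.} For $u\in S_j$, both $u$ and $m_j$ lie in the $\varepsilon$-ball about $e_j$, so $\norm{u-m_j}\le2\varepsilon$. Combining this with \eqref{eq:center-bound} yields \eqref{eq:cluster-radius}.
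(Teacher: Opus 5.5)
Your proof is correct and follows the paper's argument: the same covariance decomposition $WW^{\mathsf T}=I_k-C$, the same polar decomposition, and the same bounds $1-\sqrt{1-\lambda}\le\lambda$ and $|1-\sqrt{D_j}|\le|1-D_j|$. The only variation is in the trace bound. You use the identity $\tr C_j=D_j(1-\norm{m_j}^2)$, whereas the paper uses that the mean minimizes squared distance, $\int_{S_j}\norm{u-m_j}^2\,d\mu\le\int_{S_j}\norm{u-e_j}^2\,d\mu$. Both routes give $k\varepsilon^2$ through $\mu(S)=k$, so this step is not the decisive ``crux'' you describe.
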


\begin{proof}
Positivity follows from \eqref{eq:mass-bound}. Define
\[
 C=\sum_j\int_{S_j}(u-m_j)(u-m_j)^{\mathsf T}\,d\mu(u),\qquad
 A=[\sqrt{D_1}m_1\ \cdots\ \sqrt{D_k}m_k].
\]
The variance decomposition gives
\begin{equation}\label{eq:variance}
 AA^{\mathsf T}=I_k-C,\qquad C\succeq0,\qquad
 \tr C\le\sum_j\int_{S_j}\norm{u-e_j}^2\,d\mu(u)
          \le k\varepsilon^2.
\end{equation}
The trace inequality uses the minimizing property of the mean for squared
distance. In particular, $\norm{C}_{\op}\le1/2$ and $A$ is invertible.
Its polar decomposition has the form
$A=(I_k-C)^{1/2}Q$ with $Q\in O(k)$. Since
$1-\sqrt{1-s}\le s$ for $0\le s\le1$, we obtain
\[
 \norm{A-Q}_{\op}\le\norm{C}_{\op}\le k\varepsilon^2.
\]
Let $q_j$ be the columns of $Q$. Using $\norm{m_j}\le1$ and
$|1-\sqrt{D_j}|\le|1-D_j|$, we find
\[
 \norm{m_j-q_j}
 \le\norm{\sqrt{D_j}m_j-q_j}+|1-\sqrt{D_j}|\norm{m_j}
 \le2k\varepsilon^2.
\]
Finally, $\norm{m_j-e_j}\le\varepsilon$ because $m_j$ is a barycenter of
points in the ball around $e_j$. The triangle inequality proves
\eqref{eq:cluster-radius}.
\end{proof}

\section{Proof of linear stability}\label{sec:stability}

The upper bound in \cref{thm:stability} follows from
\eqref{eq:Lipschitz}, since every orthonormal basis has width $a_k$.
We prove the lower bound locally before using compactness.

\begin{proposition}[Local estimate]\label{prop:local}
For each $k\ge2$ there are $\eta_k>0$ and $\beta_k>0$ such that
\begin{equation}\label{eq:local}
 \Delta(S)\ge\frac{\beta_k}{2}\delta(S)
 \qquad\text{if }S\in\cA_k\text{ and }\delta(S)\le\eta_k.
\end{equation}
\end{proposition}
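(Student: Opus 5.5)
The plan is to pass from the Hausdorff distance to the orthonormal basis $q_1,\dots,q_k$ of \cref{lem:centers}, lose only a quadratic amount in doing so, and then extract a linear gain from one extreme point of $S$.

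\textbf{Setup.} Let $\varepsilon=\delta(S)\le\eta_k$, with $\eta_k$ small enough that $\varepsilon\le1/2$ and $k\varepsilon^2\le1/2$. Since $w$ and $\delta$ are rotation invariant, rotate $S$ so that $d_H(S,\basis_k)=\varepsilon$. Fix an isotropic measure $\mu$ supported in $S$. Apply \cref{lem:masses} and \cref{lem:centers} to obtain the clusters $S_j$, their barycenters $m_j$, and an orthonormal basis $q_1,\dots,q_k$ with $\norm{m_j-q_j}\le2k\varepsilon^2$. Define
\[
\rho=\max_j\sup_{u\in S_j}\norm{u-q_j}.
\]
Every $u\in S$ lies within $\rho$ of some $q_j$, and every $q_j$ lies within $\rho$ of $S_j\neq\emptyset$. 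Hence $d_H(S,\{q_j\})\le\rho$, so $\varepsilon=\delta(S)\le\rho$. By \eqref{eq:cluster-radius} also $\rho\le2\varepsilon+2k\varepsilon^2\le3\varepsilon$, so $\rho$ is small. It therefore suffices to prove $\Delta(S)\ge\beta_k\rho-C_k\rho^2$ and then shrink $\eta_k$ so that $C_k\rho\le\beta_k/2$.

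\textbf{Pointwise lower bound.} Choose $u^*\in S_j$ attaining $\norm{u^*-q_j}=\rho$. For every $x$, a supremum dominates an average, so $h_{S_j}(x)\ge\ip{x}{m_j}$. Therefore
\[
h_S(x)\ge\max\Bigl(\max_l\ip{x}{m_l},\ \ip{x}{u^*}\Bigr)\ge\max\Bigl(\max_l\ip{x}{q_l},\ \ip{x}{u^*}\Bigr)-2k\varepsilon^2\norm{x}.
\]
Taking expectations and using $\varepsilon\le\rho$ gives
\[
\Delta(S)\ge G(u^*)-2kr_k\rho^2,\qquad
G(u)=\E\Bigl[\max\bigl(M_q,\ip{g}{u}\bigr)\Bigr]-a_k,\qquad
M_q=\max_l\ip{g}{q_l}.
\]
Here $\E M_q=a_k$ because the $q_l$ form an orthonormal basis. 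This step is where \cref{lem:centers} pays off: only a quadratic error is lost, whereas using $e_j$ directly would cost order $\varepsilon$.

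\textbf{First-order gain (main obstacle).} The remaining task is a uniform bound $G(u)\ge\beta_k\norm{u-q_j}-C\norm{u-q_j}^2$ for unit vectors $u$ near $q_j$. By rotation invariance take $q_l=e_l$ and $j=1$. Write $u=e_1+w$, where $w=-(1-\cos\theta)e_1+\sin\theta\, v$ with $v\perp e_1$ a unit vector and $\norm{w}\asymp\theta$. Then
\[
G(u)\ge\E\bigl[(\ip{g}{w})_+\,;\,g_1=M\bigr],\qquad M=\max_l g_l,
\]
because on the event $\{g_1=M\}$ the gain is $(\ip{g}{u}-g_1)_+=(\ip{g}{w})_+$. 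The $e_1$ part of $w$ contributes at most $O(\theta^2)$. So it suffices to show
\[
\phi(v)=\E\bigl[(\ip{g}{v})_+\,;\,g_1=M\bigr]
\]
is bounded below by a positive constant uniformly over unit $v\perp e_1$. The function $\phi$ is continuous. It is strictly positive because, conditionally on $\{g_1=M\}$, the variable $\ip{g}{v}$ is nondegenerate and takes positive values with positive probability: for instance, it is negative-symmetric enough that the event $\{g_1=M,\ \ip{g}{v}>0\}$ contains an open set. By compactness of the unit sphere in $e_1^\perp$, $\phi\ge\beta_k>0$. This yields $G(u^*)\ge\beta_k\rho-C\rho^2$, and combining with the previous step gives $\Delta(S)\ge\beta_k\rho-C_k\rho^2\ge\frac{\beta_k}{2}\rho\ge\frac{\beta_k}{2}\delta(S)$ for $\eta_k$ small.

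I expect the care to be needed in two places. First, the positivity and continuity of $\phi$ must be verified honestly. Second, every second-order term must be checked to be controlled by $\rho^2$: this covers the $e_1$ component of $w$, the center error, and the fact that the argmax event is defined with respect to $q$ rather than $S$.
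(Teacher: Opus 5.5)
Your proof is correct and follows the paper's argument. It uses the clusters and the cluster-center lemma to lose only $O(\varepsilon^2)$, picks a point $u\in S_j$ with $\norm{u-q_j}\ge\delta(S)$, bounds $h_S\ge\max\bigl(\max_l\ip{\cdot}{m_l},\ip{\cdot}{u}\bigr)$, and gets the first-order gain from a compactness constant. The only differences are cosmetic: you replace $m_l$ by $q_l$ pointwise at cost $2k\varepsilon^2\norm{x}$ and integrate over the whole cone $\{\ip{g}{q_j}=\max_l\ip{g}{q_l}\}$, where the paper keeps the centers and restricts to the box $E_j$ on which the $j$th center wins; and your positivity of $\phi$ is cleanly justified by the nonempty open set $\{g_1>\max_{l\ne1}g_l,\ \ip{g}{v}>0\}\ni Te_1+sv$ (take $T$ large and $s>0$ small).
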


\begin{proof}
Put $\varepsilon=\delta(S)$. If $\varepsilon=0$ there is nothing to prove.
Otherwise rotate a minimizing basis to $\basis_k$, and choose any isotropic
measure supported in $S$. Set
\[
 K_k=\sqrt{4+(k-1)/16},\qquad
 \varepsilon_{0,k}=\min\left\{\frac18,\frac1{2k},
                                \sqrt{\frac3{32kK_k}}\right\}.
\]
For the moment suppose $\varepsilon\le\varepsilon_{0,k}$, and use the
clusters, centers, and orthonormal basis from \cref{lem:masses,lem:centers}.
Since $2k\varepsilon^2\le\varepsilon$, \eqref{eq:cluster-radius} gives
$\norm{u-q_j}\le3\varepsilon$ for $u\in S_j$. This is smaller than
$\sqrt2/2$. Hence every point of $S_j$ is closer to $q_j$ than to any other
$q_l$. Nonemptiness of all clusters also implies
\begin{equation}\label{eq:new-basis-distance}
 d_H(S,\{q_1,\ldots,q_k\})
 =\max_j\max_{u\in S_j}\norm{u-q_j}\ge\varepsilon.
\end{equation}
The last inequality uses minimality of the original distance $\delta(S)$.
Choose $j$ and $u\in S_j$ with $\norm{u-q_j}\ge\varepsilon$.

Decompose $u-m_j=t+bq_j$, where $t\perp q_j$. Since $u$ and $q_j$ are unit,
\[
 \ip{u-q_j}{q_j}=-\tfrac12\norm{u-q_j}^2.
\]
Consequently, with $B_k=2k+9/2$,
\begin{equation}\label{eq:tangent}
 |b|\le B_k\varepsilon^2,\qquad
 \norm{t}\ge\varepsilon-(2k+B_k)\varepsilon^2.
\end{equation}
For the first estimate use $\norm{u-q_j}\le3\varepsilon$ and
\eqref{eq:center-bound}; for the second use
$\norm{u-q_j}\le\norm{t}+|b|+\norm{m_j-q_j}$.

Define $H(x)=\max_l\ip{x}{m_l}$. Since each $m_l\in\conv S$,
$h_S(x)\ge H(x)$ for every $x$. Consider the Gaussian event
\begin{equation}\label{eq:box}
 E_j=\{1\le\ip{g}{q_j}\le2,\quad
           |\ip{g}{q_l}|\le1/4\text{ for }l\ne j\}.
\end{equation}
On $E_j$, $\norm{g}\le K_k$ and, for $l\ne j$,
\[
 \ip{g}{m_j-m_l}\ge\frac34-4kK_k\varepsilon^2\ge\frac38.
\]
Thus $H(g)=\ip{g}{m_j}$ on $E_j$. We have both $h_S(g)\ge H(g)$ and
$h_S(g)\ge\ip{g}{u}$, so
\begin{equation}\label{eq:gain-pointwise}
 h_S(g)-H(g)\ge\ind_{E_j}(g)(\ip{g}{u-m_j})_+.
\end{equation}

Let $Z\sim N(0,I_{k-1})$, and define
\begin{equation}\label{eq:beta}
 \beta_k=\Prob(1\le g_1\le2)
  \min_{\substack{v\in\R^{k-1}\\\norm{v}=1}}
  \E\left[(\ip{Z}{v})_+\ind_{[-1/4,1/4]^{k-1}}(Z)\right].
\end{equation}
The expectation is continuous in $v$. For every unit $v$, the linear
functional $z\mapsto\ip{z}{v}$ is positive on an open subset of the cube,
where the Gaussian density is positive. Compactness of the unit sphere
therefore shows that $\beta_k>0$.

The coordinates of $g$ in the $q$-basis are independent standard normals.
Since $t\perp q_j$, the definition of $\beta_k$ and
$(x+y)_+\ge x_+-|y|$ give
\begin{equation}\label{eq:gain-expectation}
 \E\left[\ind_{E_j}(\ip{g}{u-m_j})_+\right]
 \ge\beta_k\norm{t}-2|b|.
\end{equation}
Here the term $2|b|$ bounds $|b\ip{g}{q_j}|$ on $E_j$; the estimate also
holds when $t=0$. On the other hand, \eqref{eq:center-bound} gives
\begin{equation}\label{eq:baseline-centers}
 \E H(g)\ge a_k-2kr_k\varepsilon^2.
\end{equation}
Combining \eqref{eq:tangent} and
\eqref{eq:gain-pointwise}--\eqref{eq:baseline-centers}, we obtain
\[
 \Delta(S)\ge\beta_k\varepsilon-A_k\varepsilon^2,
 \qquad A_k=2kr_k+\beta_k(2k+B_k)+2B_k.
\]
Taking $\eta_k=\min\{\varepsilon_{0,k},\beta_k/(2A_k)\}$ proves
\eqref{eq:local}.
\end{proof}

\begin{proofof}{Theorem {\ref{thm:stability}}}
The upper bound was established at the beginning of this section. For the
lower bound, use \cref{prop:local} when $\delta(S)\le\eta_k$. If the set
$\{S\in\cA_k:\delta(S)\ge\eta_k\}$ is nonempty, let $b_k>0$ be its
minimum excess, which exists by \cref{lem:compactness}. Since $\delta(S)\le2$
for subsets of the unit sphere, on this set
$\Delta(S)\ge b_k\ge(b_k/2)\delta(S)$. Thus one can take
$c_k=\min\{\beta_k/2,b_k/2\}$. If that set is empty, take $c_k=\beta_k/2$.
The optimality of the exponent follows from \cref{prop:sharpness} below.
\end{proofof}

\begin{proposition}[Optimality of the linear power]\label{prop:sharpness}
For every $k\ge2$ there is a family $S_\theta\in\cA_k$, $\theta\downarrow0$,
such that $\delta(S_\theta)\asymp\theta$ and
$\Delta(S_\theta)\asymp_k\theta$. In particular, no estimate
$\delta(S)\le C\Delta(S)^p$, with fixed $C$ and $p>1$, holds for all
admissible sets in any neighborhood of the orthonormal bases.
\end{proposition}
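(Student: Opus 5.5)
The plan is to exhibit an explicit one-parameter family and then let \cref{thm:stability} do the work. The lower bound $\Delta\ge c_k\delta$ in \cref{thm:stability} was proved above using only \cref{prop:local} and \cref{lem:compactness}, and the upper bound $\Delta\le r_k\delta$ comes from \eqref{eq:Lipschitz}. Neither proof uses the present proposition, so there is no circularity. It therefore suffices to construct $S_\theta\in\cA_k$ with $\delta(S_\theta)\asymp\theta$. Then $c_k\delta(S_\theta)\le\Delta(S_\theta)\le r_k\delta(S_\theta)$ gives $\Delta(S_\theta)\asymp_k\theta$.

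\emph{Construction.} For small $\theta>0$ write $c=\cos\theta$ and $s=\sin\theta$, and take
\[
 S_\theta=\{e_1,\ ce_1+se_2,\ ce_1-se_2,\ e_2,\ldots,e_k\}.
\]
Give the two tilted vectors weight $1/2$ each. Then
\[
 \tfrac12\sum_\pm(ce_1\pm se_2)(ce_1\pm se_2)^{\mathsf T}=c^2e_1e_1^{\mathsf T}+s^2e_2e_2^{\mathsf T},
\]
because the cross terms cancel. Adding $e_1$ with weight $s^2$, $e_2$ with weight $1-s^2>0$, and $e_j$ with weight $1$ for $j\ge3$ produces $I_k$. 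So $S_\theta$ is admissible.

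\emph{Upper bound on $\delta$.} Use the basis $\basis_k$ itself. The tilted vectors lie at distance $2\sin(\theta/2)\le\theta$ from $e_1$, and all other points of $S_\theta$ belong to $\basis_k$. Hence $\delta(S_\theta)\le d_H(S_\theta,\basis_k)\le\theta$.

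\emph{Lower bound on $\delta$.} This is the only step needing care. Let $Q\basis_k=\{q_1,\ldots,q_k\}$ attain $\delta=\delta(S_\theta)$, and suppose $\delta<1/4$; this holds once $\theta<1/4$, by the upper bound.
\begin{itemize}
 \item Every point of $S_\theta$ lies within $\delta$ of some $q_i$, and every $q_i$ lies within $\delta$ of some point of $S_\theta$.
 \item Distinct $q_i$ are at distance $\sqrt2>2\delta$, so each point of $S_\theta$ is assigned to a unique $q_i$.
 \item Points assigned to the same $q_i$ are within $2\delta$ of each other.
 \item The points $e_2,\ldots,e_k$ are pairwise at distance $\sqrt2$. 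For small $\theta$, each is at distance close to $\sqrt2$ (in particular greater than $1$) from each point of the group $G=\{e_1,ce_1\pm se_2\}$. Hence the $k-1$ points $e_2,\ldots,e_k$ use $k-1$ distinct $q_i$, none shared with $G$.
 \item Each $q_i$ has some point of $S_\theta$ within $\delta$, so every $q_i$ is used. Only one $q_i$ remains for $G$, and the whole group $G$ is assigned to it.
\end{itemize}
Consequently $2\delta\ge\norm{(ce_1+se_2)-(ce_1-se_2)}=2s$, so $\delta\ge\sin\theta\ge2\theta/\pi$. Together with the upper bound, $\delta(S_\theta)\asymp\theta$.

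\emph{Final assertion.} Suppose $\delta(S)\le C\Delta(S)^p$ held near the bases with some $p>1$. Applied to $S_\theta$ it would give $\theta\lesssim\delta(S_\theta)\le C(r_k\theta)^p$, which is impossible as $\theta\downarrow0$.
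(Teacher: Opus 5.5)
Your proof is correct and follows the paper's route: an explicit admissible tilt of $\basis_k$ in the $(e_1,e_2)$-plane, a cluster-assignment argument giving $\delta(S_\theta)\asymp\theta$, and then the two-sided bound of \cref{thm:stability}. The differences are cosmetic. The paper takes $S_\theta=\basis_k\cup R_\theta\basis_k$ with all weights $1/2$, and it forces the two nearby points $e_1$ and $R_\theta e_1$ into one cluster directly: otherwise two distinct basis vectors would lie within $2d+2\sin(\theta/2)<\sqrt2$ of each other. Applying that argument to $ce_1\pm se_2$ would also let you skip your pigeonhole step.
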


\begin{proof}
Let $R_\theta$ rotate the $(e_1,e_2)$-plane by $\theta$ and fix its orthogonal
complement, and put $S_\theta=\basis_k\cup R_\theta\basis_k$.
Assign weight $1/2$ to each vector of the two bases, adding weights at repeated
points. The resulting measure is isotropic. The midpoint basis gives
$\delta(S_\theta)\le2\sin(\theta/4)$.

For the reverse bound, take $0<\theta\le1/4$ and any basis at Hausdorff
distance $d<1/4$ from $S_\theta$. The points $e_1$ and $R_\theta e_1$ must
be assigned to the same basis vector. If they were within $d$ of distinct
basis vectors, the distance between those vectors would be at most
$2d+2\sin(\theta/2)<\sqrt2$, a contradiction. Hence
$d\ge\norm{e_1-R_\theta e_1}/2=\sin(\theta/2)$. If $d\ge1/4$, this
lower bound holds as well. Therefore
\[
 \sin(\theta/2)\le\delta(S_\theta)\le2\sin(\theta/4).
\]
The already proved two-sided estimate gives the claimed order for $\Delta$.
If $\delta\le C\Delta^p$ with $p>1$, the upper estimate
$\Delta\le r_k\delta$ would imply
$1\le Cr_k^p\delta^{p-1}$ along this family, which is impossible.
\end{proof}

\section{Flat covariance spectra and the divisibility gap}\label{sec:arithmetic}

\subsection{Frame representation and attainment}

For $N\ge k$, put $\lambda=N/k$ and define the set of unit-norm tight frames
\[
 \cF_{N,k}=\left\{(u_1,\ldots,u_N)\in(S^{k-1})^N:
                      \sum_{i=1}^Nu_i u_i^{\mathsf T}=\lambda I_k\right\}.
\]
If $U$ has rows $u_i^{\mathsf T}$, then $R=UU^{\mathsf T}$ has unit diagonal
and its nonzero eigenvalues are those of $U^{\mathsf T}U=\lambda I_k$.
Thus $R\in\cR_{N,k}$. Conversely, if $R\in\cR_{N,k}$, its spectral
factorization gives $R=UU^{\mathsf T}$ with $U^{\mathsf T}U=\lambda I_k$;
unit diagonal implies that its rows are unit vectors. Moreover,
\begin{equation}\label{eq:frame-law}
 (\ip{g}{u_1},\ldots,\ip{g}{u_N})\sim N(0,R).
\end{equation}
It follows that \eqref{eq:L} is the minimum of $w(\{u_i\})$ over
$\cF_{N,k}$. This set is closed in the compact space $(S^{k-1})^N$.
The objective is continuous, since changing every $u_i$ by at most $\epsilon$
changes it by at most $r_k\epsilon$. Hence a minimum exists whenever the
feasible set is nonempty. To see nonemptiness for every $N\ge k$, the
vector with $k$ entries $N/k$ and $N-k$ zeros majorizes the all-ones
vector in $\R^N$. Its first $j\le k$ entries sum to $jN/k\ge j$;
for $k<j\le N$ they sum to $N\ge j$, and both total sums are $N$.
The real Schur--Horn theorem therefore supplies a real symmetric matrix
with this spectrum and unit diagonal \cite{CahillEtAl13}. Its eigenvalues
are nonnegative, so the frame factorization above applies.

Assign the weight $d_i=1/\lambda$ to each frame vector. This makes the family
isotropic, so its width is at least $a_k$. By \cref{prop:finite}, equality
requires exactly $k$ distinct orthonormal vectors, with total weight $1$ at
each. If $n_j$ denotes the multiplicity of the $j$th vector, this says
$n_j/\lambda=1$. Consequently equality is possible exactly when $k\mid N$.
For $N=mk$, each multiplicity is $m$ and the Gram matrix has the stated
all-ones block form. This proves the first assertion of \cref{thm:arithmetic}.

\Needspace{5\baselineskip}
\subsection{The arithmetic lower bound}

\begin{lemma}\label{lem:integer}
If $k\nmid N$ and $(u_i)\in\cF_{N,k}$, then
\begin{equation}\label{eq:integer-radius}
 \delta(\{u_i\})\ge\frac1{\sqrt{kN}}.
\end{equation}
\end{lemma}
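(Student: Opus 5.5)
We argue by contradiction, combining \cref{lem:masses} with the integrality of the cluster multiplicities. Suppose $(u_i)\in\cF_{N,k}$ with $k\nmid N$ and $\varepsilon:=\delta(\{u_i\})<1/\sqrt{kN}$. After rotating, we may assume $d_H(S,\basis_k)=\varepsilon$, where $S=\{u_i\}$. The isotropic measure is $\mu=\sum_i (k/N)\delta_{u_i}$, with repeated vectors counted with multiplicity. Since $N\ge k\ge2$, we have $kN\ge4$, so $\varepsilon<1/2$ and \cref{lem:masses} applies. The clusters $S_j$ are therefore disjoint and cover $S$. Let $n_j$ be the number of indices $i$ with $u_i\in S_j$, so that $D_j=kn_j/N$.

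\cref{lem:masses} gives $|D_j-1|\le k\varepsilon^2$, which becomes
\[
 \Bigl|n_j-\frac Nk\Bigr|\le N\varepsilon^2<\frac1k .
\]
Because $k\nmid N$, the distance from $N/k$ to the nearest integer is at least $1/k$. This contradicts the integrality of $n_j$, and it proves \eqref{eq:integer-radius}. The boundary case $\varepsilon=1/\sqrt{kN}$ is not excluded, but it is not needed, since we assumed strict inequality.

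The main point to check is that the mass bound in \cref{lem:masses} is used with the correct constant. We need $|D_j-1|\le k\varepsilon^2$ to hold for every $j$, and the lemma states exactly this. Here $\varepsilon$ is the Hausdorff distance to the rotated minimizing basis, and that is what the lemma requires. One could sharpen the argument by using only the one-sided bounds $1-k\varepsilon^2\le D_j\le 1+2\varepsilon^2$ together with $\sum_j n_j=N$: some $n_j$ must lie below $N/k$ and some above. This sharpening is not needed for the stated constant.
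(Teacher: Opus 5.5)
Your proof is correct and is essentially the paper's argument: apply \cref{lem:masses} to the clusters around a minimizing basis, convert $|D_j-1|\le k\varepsilon^2$ into $|n_j-N/k|\le N\varepsilon^2$, and compare this with the integrality gap $1/k$. The only difference is that you argue by contradiction, which makes $\varepsilon<1/2$ automatic, whereas the paper argues directly after disposing of the case $\varepsilon>1/2$ via $kN\ge4$.
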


\begin{proof}
Write $\varepsilon=\delta(\{u_i\})$. If $\varepsilon>1/2$, the conclusion
follows from $kN\ge4$. Otherwise rotate a minimizing basis to $\basis_k$,
and use the clusters in \cref{lem:masses}. Count the indices in cluster $j$, including repetitions, as $n_j$.
Then $D_j=n_j/\lambda$, and \eqref{eq:mass-bound} gives
\[
 |n_j-\lambda|\le k\lambda\varepsilon^2=N\varepsilon^2.
\]
But $n_j$ is an integer and $\lambda=N/k$ is a noninteger with denominator
dividing $k$, so $|n_j-\lambda|\ge1/k$. This proves
\eqref{eq:integer-radius}.
\end{proof}

\Cref{thm:stability,lem:integer}, applied to every feasible frame, imply
\eqref{eq:arithmetic-lower}. Notice that the integer obstruction concerns
cluster masses, whose error is quadratic, whereas the width gap controls
the cluster radius linearly.

\subsection{A matching construction}

\begin{proposition}\label{prop:construction}
Suppose $N=mk+r$, $1\le r\le k-1$, and $m\ge2k$. There is a frame in
$\cF_{N,k}$ whose set of directions $S$ satisfies
\begin{equation}\label{eq:construction-bound}
 d_H(S,\basis_k)\le\frac{2\sqrt{k}}{\sqrt N},\qquad
 w(S)-a_k\le\frac{2\sqrt{k}\,r_k}{\sqrt N}.
\end{equation}
\end{proposition}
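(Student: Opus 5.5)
The plan is an explicit construction: keep most frame vectors exactly on the basis and move the excess diagonal mass with tilted pairs, which produce no off-diagonal terms. Then I would invoke \eqref{eq:Lipschitz}. Write $\lambda=N/k=m+r/k$. I would place $m+1$ vectors in each of the clusters $j=1,\ldots,r$ (the \emph{sources}) and $m$ vectors in each of the clusters $j=r+1,\ldots,k$ (the \emph{targets}). The vectors in target clusters are all equal to $e_l$. Each source cluster carries diagonal excess $1-r/k=(k-r)/k$. Each target cluster has deficit $r/k$. The uniform transport plan $t_{jl}=1/k$, from every source $j$ to every target $l$, balances both: each source sends $(k-r)/k$ in total, and each target receives $r/k$.

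To send mass $t$ from $e_j$ to $e_l$, I would use $p$ pairs of the form $\cos\theta\,e_j\pm\sin\theta\,e_l$. One such pair contributes $2\cos^2\theta\,e_je_j^{\mathsf T}+2\sin^2\theta\,e_le_l^{\mathsf T}$, because the cross terms cancel. Hence $p$ pairs with $2p\sin^2\theta=t$ move exactly $t$. Pairs for different target directions live in different coordinate planes and do not interact. In source cluster $j$, I would reserve $p=\lfloor (m+1)/(2(k-r))\rfloor$ pairs for each of the $k-r$ targets and leave the remaining vectors at $e_j$. The hypothesis $m\ge2k$ gives $p\ge1$. With $t=1/k$ we get $\sin^2\theta=1/(2pk)$. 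Summing all contributions should then give exactly $\sum_iu_iu_i^{\mathsf T}=\lambda I_k$, so the family lies in $\cF_{N,k}$.

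For the distance bound, every vector sits within $2\sin(\theta/2)\le\sqrt2\sin\theta$ of a basis vector, because $\theta\le\pi/4$. Every $e_j$ also belongs to $S$, since target clusters consist of exact basis vectors and each source cluster keeps at least one untilted vector or can be checked directly. Hence $d_H(S,\basis_k)\le\sqrt{2}\sin\theta=1/\sqrt{pk}$. It remains to show $pk\ge N/(4k)$, that is, $p\ge N/(4k^2)$. From $p\ge (m+1)/(2(k-r))-1$, the bounds $k-r\le k-1$ and $m\ge2k$, and $N\le (m+1)k$, this becomes an elementary inequality. The second estimate in \eqref{eq:construction-bound} then follows from \eqref{eq:Lipschitz}, since $w(\basis_k)=a_k$.

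The main obstacle is the bookkeeping of constants in this floor estimate. It must hold for all $1\le r\le k-1$ and all $m\ge 2k$, and the floor matters most when $k-r$ is large and $m$ is small. If the crude bound fails at the boundary, I would fall back on two variants. The first assigns the pairs of a source cluster as evenly as possible among the targets, so that $p_l\in\{\lfloor\cdot\rfloor,\lceil\cdot\rceil\}$ with separate angles $\theta_l$. The second uses the target clusters to absorb the mass by tilting their vectors toward the sources, which doubles the available pairs. I also need to verify that at least one vector stays at each $e_j$, or else check the Hausdorff bound for $e_j$ directly using the tilted pairs.
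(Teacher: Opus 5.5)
Your construction is the paper's. The $r$ clusters of size $m+1$ each send mass $1/k$ to each of the $k-r$ clusters of size $m$, using $p=\lfloor x\rfloor$ pairs $\cos\theta\,e_j\pm\sin\theta\,e_l$ with $x=(m+1)/(2(k-r))$ and $\sin^2\theta=1/(2kp)$. The bounds then come from $2\sin(\theta/2)\le\sqrt2\sin\theta=1/\sqrt{kp}$ and \eqref{eq:Lipschitz}.

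The step that fails as written is the floor estimate you flagged. The bound $p\ge x-1$ cannot give $p\ge N/(4k^2)$ when $k\ge4$. For $k=4$, $r=1$, $m=8$ (so $N=33$) it yields only $p\ge 1/2<33/64$. More generally, for $r=1$ and $m=2k$ one has $x-1=3/(2(k-1))\to0$, while $N/(4k^2)>1/2$. The actual value of $p$ is fine, so no variant is needed; only the floor inequality must change. Since $m\ge2k$ and $r\ge1$ give $m+1>2(k-r)$, you have $x>1$. Now use $\lfloor x\rfloor\ge x/2$, which is valid for $x\ge1$. This gives $p\ge(m+1)/(4(k-r))\ge(m+1)/(4k)>N/(4k^2)$, because $N<k(m+1)$. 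This is exactly the paper's chain $1/\sqrt{kp}\le 2\sqrt{(k-r)/(k(m+1))}\le 2/\sqrt{m+1}\le 2\sqrt k/\sqrt N$.

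Your fallbacks would not have rescued the crude bound anyway. Spreading the pairs evenly still leaves the smallest per-target count equal to $\lfloor x\rfloor$, since $\lfloor\lfloor (m+1)/2\rfloor/(k-r)\rfloor=\lfloor x\rfloor$. Tilting target vectors toward a source lowers the target's diagonal entry, which is the entry that already has a deficit.

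There is also a smaller point in the reverse Hausdorff direction. A source cluster need not keep any untilted copy of $e_j$: for $k=2$, $r=1$, $m=5$ one gets $p=3$ and $m+1-2p(k-r)=0$. The direct check you mention is what works, and it is what the paper uses. Since $\norm{e_j-(\cos\theta\,e_j\pm\sin\theta\,e_l)}=2\sin(\theta/2)$ and every axis has at least one assigned vector, each $e_j$ lies within $2\sin(\theta/2)$ of $S$.
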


\begin{proof}
Let $p=k-r$, and set
\begin{equation}\label{eq:construction-parameters}
 P=\left\lfloor\frac{m+1}{2p}\right\rfloor,\qquad
 \sin^2\theta=\frac1{2kP},\qquad 0<\theta<\frac\pi2.
\end{equation}
Here $P\ge1$. Start with $m$ copies of $e_l$ for $1\le l\le p$, and
$m+1$ copies of $e_j$ for $p<j\le k$. For each pair $(j,l)$ with
$j>p$ and $l\le p$, replace $2P$ of the copies assigned to $j$ by $P$
copies of each of
\begin{equation}\label{eq:construction-vectors}
 v_{jl}^+=\cos\theta\,e_j+\sin\theta\,e_l,
 \qquad v_{jl}^-=\cos\theta\,e_j-\sin\theta\,e_l.
\end{equation}
Keep the remaining $m+1-2Pp$ copies of $e_j$. This number is nonnegative,
and the total number of vectors remains $pm+r(m+1)=N$. All vectors are unit.

The sum of the outer products for one group of $2P$ vectors is
\[
 P\bigl(v_{jl}^+(v_{jl}^+)^{\mathsf T}
       +v_{jl}^-(v_{jl}^-)^{\mathsf T}\bigr)
 =2P\cos^2\theta\,e_je_j^{\mathsf T}
   +2P\sin^2\theta\,e_le_l^{\mathsf T}.
\]
Compared with $2P$ copies of $e_j$, this transfers
$2P\sin^2\theta=1/k$ from the $j$th diagonal to the $l$th diagonal and
creates no off-diagonal entries. Each low-count axis receives $r/k$, and
each high-count axis loses $p/k$. The resulting diagonal entries are all
$m+r/k=N/k$. Hence the constructed family belongs to $\cF_{N,k}$.

Every vector assigned to axis $j$ lies within $2\sin(\theta/2)$ of $e_j$,
and each axis has at least one assigned vector. Thus the same quantity bounds
both directed Hausdorff distances. Since $\theta\in(0,\pi/2)$,
\[
 2\sin(\theta/2)\le\sqrt2\sin\theta=\frac1{\sqrt{kP}}.
\]
The inequality $\lfloor x\rfloor\ge x/2$ for $x\ge1$ gives
$P\ge(m+1)/(4p)$. Therefore
\[
 \frac1{\sqrt{kP}}
 \le2\sqrt{\frac{p}{k(m+1)}}
 \le\frac2{\sqrt{m+1}}
 \le\frac{2\sqrt{k}}{\sqrt N},
\]
where the last step uses $N<k(m+1)$. The width estimate follows from
\eqref{eq:Lipschitz} and $w(\basis_k)=a_k$.
\end{proof}

If $N\ge2k^2$ and $k\nmid N$, then $m=\lfloor N/k\rfloor\ge2k$.
Thus \cref{prop:construction} proves the third assertion of
\cref{thm:arithmetic}, completing its proof. The construction is used to
match the order of the minimum; we do not assert that it gives an exact
minimizer when $k\nmid N$.

\section{Recovery of covariance blocks}\label{sec:blocks}

The stability estimate controls near-minimizers. It bounds deviations
within a block and correlations between different blocks.

\begin{corollary}\label{cor:blocks}
Let $(u_i,d_i)_{i=1}^N$ be a finite isotropic family in $\R^k$, let
$R_{ii'}=\ip{u_i}{u_{i'}}$, and put
$\Delta=\E\max_i\ip{g}{u_i}-a_k$. If $\Delta\le c_k/2$, there are an
orthonormal basis $q_1,\ldots,q_k$ and a partition into nonempty classes
$I_1,\ldots,I_k$ such that
\begin{align}
 \max_{i\in I_j}\norm{u_i-q_j}&\le\frac{\Delta}{c_k},
                                                        \label{eq:block-directions}\\
 \left|\sum_{i\in I_j}d_i-1\right|&\le\frac{k\Delta^2}{c_k^2},
                                                        \label{eq:block-masses}\\
 0\le1-R_{ii'}&\le\frac{2\Delta^2}{c_k^2}
          &&(i,i'\in I_j),                              \label{eq:block-within}\\
 |R_{ii'}|&\le\frac{2\Delta}{c_k}
          &&(i\in I_j,\ i'\in I_l,\ j\ne l).           \label{eq:block-between}
\end{align}
If in addition $N=mk$ and all $d_i=1/m$, then $|I_j|=m$ for every $j$
whenever
\begin{equation}\label{eq:block-threshold}
 \Delta<\min\left\{\frac{c_k}{2},\frac{c_k}{\sqrt{km}}\right\}.
\end{equation}
\end{corollary}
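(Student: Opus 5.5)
The plan is to reduce everything to \cref{thm:stability} and the cluster bookkeeping of \cref{lem:masses}. Let $S=\{u_1,\ldots,u_N\}$ be the set of distinct directions. The measure $\mu=\sum_i d_i\delta_{u_i}$, with weights added at repeated points, is isotropic, so $S\in\cA_k$ and $w(S)-a_k=\Delta$. Put $\varepsilon=\delta(S)$. The lower bound in \eqref{eq:stability} gives
\[
 \varepsilon\le\frac{\Delta}{c_k}\le\frac12 .
\]
Choose $Q\in O(k)$ attaining $\delta(S)$, and let $q_j=Qe_j$. After rotating by $Q^{\mathsf T}$, \cref{lem:masses} applies with this $\varepsilon$. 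It yields nonempty, pairwise disjoint clusters $S_j=S\cap\{u:\norm{u-q_j}\le\varepsilon\}$ covering $S$.

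Define $I_j=\{i:u_i\in S_j\}$. These classes partition the index set because the $S_j$ partition $S$, and they are nonempty because each $S_j$ is. The four bounds then follow in turn.
\begin{itemize}
\item[(a)] Bound \eqref{eq:block-directions} is the definition of $S_j$ together with $\varepsilon\le\Delta/c_k$.
\item[(b)] Since $\mu(S_j)=\sum_{i\in I_j}d_i$, the mass estimate \eqref{eq:mass-bound} gives \eqref{eq:block-masses} via $k\varepsilon^2\le k\Delta^2/c_k^2$.
\item[(c)] For $i,i'\in I_j$, unit length gives $1-R_{ii'}=\tfrac12\norm{u_i-u_{i'}}^2\ge0$. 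The triangle inequality gives $\norm{u_i-u_{i'}}\le2\varepsilon$, so $1-R_{ii'}\le2\varepsilon^2$, which is \eqref{eq:block-within}.
\item[(d)] For $i\in I_j$ and $i'\in I_l$ with $l\ne j$, write
\[
 R_{ii'}=\ip{q_j}{u_{i'}}+\ip{u_i-q_j}{u_{i'}} .
\]
The first term is at most $\varepsilon$ in absolute value: the proof of \cref{lem:masses} shows $|\ip{u}{q_j}|\le\varepsilon$ for $u$ in another cluster, since $\ip{u}{q_j}=\ip{u-q_l}{q_j}$. The second term is at most $\norm{u_i-q_j}\le\varepsilon$ by Cauchy--Schwarz. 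Together these give \eqref{eq:block-between}.
\end{itemize}

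For the equal-weight statement, suppose $N=mk$ and $d_i=1/m$. Then $\sum_{i\in I_j}d_i=|I_j|/m$. The strict threshold \eqref{eq:block-threshold} gives $\varepsilon\le\Delta/c_k<1/\sqrt{km}$, so \eqref{eq:mass-bound} yields
\[
 \bigl||I_j|-m\bigr|\le mk\varepsilon^2<1 .
\]
Since $|I_j|-m$ is an integer, it must equal $0$.

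I do not expect a real obstacle; the work is already done by \cref{thm:stability}. The points needing care are bookkeeping ones. First, repeated vectors must be grouped so that \cref{lem:masses}, which is stated for sets, applies to the indexed family. Second, the requirement $\varepsilon\le1/2$ in \cref{lem:masses} is guaranteed exactly by the hypothesis $\Delta\le c_k/2$. Third, the between-block bound must use the cross-cluster estimate $|\ip{u}{q_j}|\le\varepsilon$ rather than a cruder bound of order $2\varepsilon+\varepsilon^2$.
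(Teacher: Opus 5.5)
Your proposal is correct and follows the paper's proof essentially step for step. \cref{thm:stability} gives $\delta(S)\le\Delta/c_k\le1/2$, and the clusters of \cref{lem:masses} around a minimizing basis give the partition together with the direction and mass bounds; the within-block bound, the between-block bound via $\ip{q_j}{u_{i'}}=\ip{q_j}{u_{i'}-q_l}$, and the integer-count argument are the same as the paper's. Your remarks on grouping repeated vectors and on where $\Delta\le c_k/2$ is used are accurate bookkeeping that the paper leaves implicit.
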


\begin{proof}
Choose a basis realizing $\delta(S)$, where $S=\{u_i\}$.
\Cref{thm:stability} gives $\delta(S)\le\Delta/c_k\le1/2$.
Partition indices according to the clusters around that basis.
\Cref{lem:masses} proves \eqref{eq:block-directions} and
\eqref{eq:block-masses}. Within one cluster,
$\norm{u_i-u_{i'}}\le2\Delta/c_k$, so
$1-R_{ii'}=\norm{u_i-u_{i'}}^2/2$ proves \eqref{eq:block-within}.
For different clusters, orthogonality and the unit norms imply
\[
 |\ip{u_i}{u_{i'}}|
 =|\ip{u_i-q_j}{u_{i'}}+\ip{q_j}{u_{i'}-q_l}|
 \le\frac{2\Delta}{c_k},
\]
which is \eqref{eq:block-between}. In the equal-weight case,
\eqref{eq:block-masses} gives
$\bigl||I_j|-m\bigr|\le mk\Delta^2/c_k^2<1$ under
\eqref{eq:block-threshold}. This is an integer, and hence equals zero.
\end{proof}

\section{The planar asymptotic constant}\label{sec:planar}

In dimension two, a frame with vector angles $\theta_i$ is tight precisely
when $\sum_i e^{2\mathrm i\theta_i}=0$. For its set of directions $S$, the
planar perimeter formula gives
\begin{equation}\label{eq:perimeter}
 w(S)=\frac{P(\conv S)}{2\sqrt{2\pi}},\qquad a_2=\frac1{\sqrt\pi}.
\end{equation}
Here a segment has perimeter twice its length. The formula follows by
averaging the support function over directions and then over the Gaussian
radius, whose mean is $\sqrt{\pi/2}$.

\subsection{A uniform perimeter expansion}

\begin{lemma}\label{lem:planar-expansion}
Suppose a finite set of unit vectors consists of two nonempty clusters with
angles $x_i$ and $\pi/2+y_i$, where $|x_i|,|y_i|\le\epsilon$ and
$0<\epsilon\le1/8$. Write $x_-,x_+$ and $y_-,y_+$ for the extreme
angles of the two clusters. Then
\begin{equation}\label{eq:perimeter-expansion}
 \begin{split}
 P(\conv S)=2\sqrt2
 &+(x_+-x_-)+(y_+-y_-)\\
 &+\frac1{\sqrt2}(y_-+y_+-x_--x_+)+O(\epsilon^2).
 \end{split}
\end{equation}
The error constant is independent of the number of vectors and their
multiplicities.
\end{lemma}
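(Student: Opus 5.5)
The plan is to compute the perimeter exactly as a sum of chord lengths, and then Taylor-expand each chord. First I would identify the boundary of $\conv S$. Every point of $S$ lies on the unit circle, so every distinct point of $S$ is an extreme point of $\conv S$. Repeated vectors contribute a single point, which is why multiplicities play no role. All angles lie in the arc $[-\epsilon,\pi/2+\epsilon]$, whose length $\pi/2+2\epsilon$ is less than $\pi$. Hence the boundary of $\conv S$ is the closed polygon that visits the distinct points in increasing angular order: first $x_-\le\dots\le x_+$, then $\pi/2+y_-\le\dots\le\pi/2+y_+$, and finally the closing edge back to $x_-$. In the degenerate case of exactly two distinct points, the hull is a segment. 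The convention that a segment has perimeter twice its length is exactly what this closed-polygon sum produces, since the two connecting chords coincide.

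Next I would use the fact that a chord between angles $\alpha<\beta$ with $\beta-\alpha\le\pi$ has length $2\sin((\beta-\alpha)/2)$, and split the perimeter into three groups.

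\emph{Within-cluster edges.} For the first cluster, let the consecutive gaps be $\gamma_i\ge0$, so that $\sum\gamma_i=x_+-x_-\le2\epsilon$. Since $0\le\gamma-2\sin(\gamma/2)\le\gamma^3/24$, the total error is at most $\sum\gamma_i^3/24\le(\sum\gamma_i)^3/24=O(\epsilon^3)$. This bound does not depend on the number of points, and it is the only place where uniformity needs care. The first cluster therefore contributes $(x_+-x_-)+O(\epsilon^3)$, and the second contributes $(y_+-y_-)+O(\epsilon^3)$ in the same way.

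\emph{The edge from $x_+$ to $\pi/2+y_-$.} The angular gap is $\pi/2+(y_--x_+)$, so the chord has length $2\sin(\pi/4+(y_--x_+)/2)$. Expanding around $\pi/4$ with derivative $2\cos(\pi/4)\cdot\tfrac12=1/\sqrt2$ gives $\sqrt2+(y_--x_+)/\sqrt2+O(\epsilon^2)$, since $|y_--x_+|\le2\epsilon$.

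\emph{The closing edge from $\pi/2+y_+$ back to $x_-$.} Its shorter angular gap is $\pi/2+y_+-x_-<\pi$, so the same expansion gives $\sqrt2+(y_+-x_-)/\sqrt2+O(\epsilon^2)$.

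Adding the three groups yields \eqref{eq:perimeter-expansion}. The main point to check is the uniform control of the within-cluster sums, which the cubic bound $\sum\gamma_i^3\le(\sum\gamma_i)^3$ handles. The remaining errors come from two fixed Taylor remainders bounded by $\sup|\sin''|\cdot(2\epsilon)^2$, so the implied constant is absolute. The restriction $\epsilon\le1/8$ is used only to guarantee that the two clusters are separated and that all gaps stay below $\pi$.
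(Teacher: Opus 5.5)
Your proposal is correct and follows essentially the same route as the paper. Both arguments write the perimeter as a sum of chords $2\sin(v/2)$ over consecutive distinct angles and control the within-cluster edges uniformly via $\sum\gamma_i^3\le(\sum\gamma_i)^3$. Both then Taylor-expand the two connecting chords. The only cosmetic difference is that you measure the closing edge by the short gap $\pi/2+y_+-x_-$ rather than the paper's $3\pi/2+x_--y_+$, and the two give the same chord length.
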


\begin{proof}
Remove repeated points and order the remaining angles around the circle.
Each point is a vertex of the convex hull. A gap $v$ between consecutive
angles contributes the chord $2\sin(v/2)$ to the perimeter. Within the
first cluster the gaps sum to $x_+-x_-$, and their cubic powers sum to at
most $(x_+-x_-)^3$. Thus the total chord length within that cluster is
$x_+-x_-+O(\epsilon^3)$. The same argument applies to the second cluster.
The two remaining gaps are
$\pi/2+y_--x_+$ and $3\pi/2+x_--y_+$. Taylor expansion of their chord
lengths gives the constant $2\sqrt2$ and the linear term displayed in
\eqref{eq:perimeter-expansion}, with error $O(\epsilon^2)$. The calculation
also applies when a cluster consists of a single point.
\end{proof}

\begin{proofof}{Theorem {\ref{thm:planar}}}
We first prove a lower bound for a minimizing frame with odd size $N$.
\Cref{thm:arithmetic} gives an upper bound $O(N^{-1/2})$ for its excess.
\Cref{thm:stability} therefore places its directions within $O(N^{-1/2})$
of a basis. Apply \cref{lem:centers} and use its orthonormal basis as the
coordinate basis. The resulting cluster angles $x_i,y_i$ obey
\begin{equation}\label{eq:planar-small}
 \begin{gathered}
 \max_i(|x_i|,|y_i|)=O(N^{-1/2}),\quad n_1,n_2=N/2+O(1),\\
 \bar x=O(N^{-1}),\qquad \bar y=O(N^{-1}).
 \end{gathered}
\end{equation}
All constants here are uniform over minimizing frames. The angle means
follow from \eqref{eq:center-bound}, since the averages of $\sin x_i$
and $\sin y_i$ are $O(N^{-1})$ and $|s-\sin s|=O(|s|^3)$.
The count estimate follows from \cref{lem:masses}.

Let the centered, scaled angles in cluster $j$ range over $[-A_j,B_j]$,
and let $V_j$ be their empirical variance. Thus in the first cluster
the scaled angles are $\sqrt N(x_i-\bar x)$, and likewise in the second.
Their means are zero, so $A_j,B_j\ge0$ and
\begin{equation}\label{eq:range-variance}
 V_j\le A_jB_j.
\end{equation}
Indeed $(z+A_j)(B_j-z)\ge0$ on this interval, and its empirical mean is
$A_jB_j-V_j$.

The diagonal tight-frame equation is
$\sum_{i=1}^{n_1}\cos(2x_i)=\sum_{i=1}^{n_2}\cos(2y_i)$.
Using \eqref{eq:planar-small} and $\cos(2s)=1-2s^2+O(s^4)$ gives
\begin{equation}\label{eq:planar-defect}
 n_1-n_2=\frac{2n_1}{N}V_1-\frac{2n_2}{N}V_2+O(N^{-1})
         =V_1-V_2+O(N^{-1}).
\end{equation}
The integer $n_1-n_2$ is odd. Hence $|V_1-V_2|\ge1-O(N^{-1})$.

Put $\kappa_\pm=1\pm1/\sqrt2$. By
\cref{lem:planar-expansion}, replacing angle means by zero changes the
perimeter by only $O(N^{-1})$. Consequently
\begin{equation}\label{eq:planar-objective}
 \begin{gathered}
 \sqrt N\,(w(S)-a_2)=\frac{J}{2\sqrt{2\pi}}+O(N^{-1/2}),\\
 J=\kappa_+A_1+\kappa_-B_1+\kappa_-A_2+\kappa_+B_2.
 \end{gathered}
\end{equation}
Since $\kappa_-\kappa_+=1/2$, weighted arithmetic--geometric mean and
\eqref{eq:range-variance} imply
\[
 J\ge\sqrt2(\sqrt{V_1}+\sqrt{V_2})
   \ge\sqrt{2|V_1-V_2|}\ge\sqrt2-O(N^{-1}).
\]
Thus $L_{N,2}-a_2\ge1/(2\sqrt{\pi N})-O(N^{-1})$.

For the upper bound write $N=2m+1$, $n=m+1$, and set
\begin{equation}\label{eq:planar-construction}
 p=\frac{2-\sqrt2}{4},\quad A=\lfloor pn+1/2\rfloor,\quad B=n-A,
 \quad d=\arcsin\sqrt{\frac{N}{4AB}}.
\end{equation}
These expressions are well-defined for all sufficiently large $m$. Define
\[
 x=-\tfrac12\arg(A+B e^{2\mathrm i d}),\qquad y=x+d,
\]
using the argument near zero. Since
$|A+B e^{2\mathrm i d}|^2=n^2-4AB\sin^2d=m^2$, we have
\begin{equation}\label{eq:planar-exact}
 A e^{2\mathrm i x}+B e^{2\mathrm i y}=m.
\end{equation}
Take $m$ copies of $e_1$. Add $A$ copies of $(-\sin x,\cos x)$
and $B$ copies of $(-\sin y,\cos y)$. Equation \eqref{eq:planar-exact} says
that their doubled-angle sum vanishes. They form an exactly tight frame.

Since $A/n=p+O(N^{-1})$ and $p(1-p)=1/8$, expansion of
\eqref{eq:planar-construction} yields
\[
 \begin{gathered}
 \sqrt N\,d=2\sqrt2+O(N^{-1}),\\
 \sqrt N\,x=-(1+\sqrt2)+O(N^{-1}),\quad
 \sqrt N\,y=\sqrt2-1+O(N^{-1}).
 \end{gathered}
\]
For the last two identities use
$\arg(A+B e^{2\mathrm i d})=2(B/n)d+O(d^3)$.
The first cluster has zero range, while the second has the scaled endpoints
$-(1+\sqrt2)$ and $\sqrt2-1$ up to $O(N^{-1})$.
Their contribution to $J$ is
$\kappa_-(1+\sqrt2)+\kappa_+(\sqrt2-1)=\sqrt2$.
\Cref{lem:planar-expansion} gives the matching upper bound and proves
\eqref{eq:planar-main}.
\end{proofof}

\subsection{The geometry of asymptotic minimizers}

\begin{corollary}\label{cor:planar-rigidity}
Suppose $N=2m+1\to\infty$ and a sequence of tight frames satisfies
\[
 w(S_N)-a_2=\frac1{2\sqrt{\pi N}}+o(N^{-1/2}).
\]
After orthogonal transformations and a relabeling of the clusters, their
sizes are $m$ and $m+1$ for all sufficiently large $N$. In the coordinates
used in \eqref{eq:planar-small}, the scaled centered angles of the smaller
cluster converge uniformly to zero. The minimum and maximum scaled
centered angles of the larger cluster converge to $-(1+\sqrt2)$ and
$\sqrt2-1$. Its empirical angle distribution converges weakly to
\begin{equation}\label{eq:planar-limit-law}
 p\delta_{-(1+\sqrt2)}+(1-p)\delta_{\sqrt2-1},\qquad
 p=\frac{2-\sqrt2}{4}.
\end{equation}
\end{corollary}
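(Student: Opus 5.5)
The plan is to rerun the lower-bound argument from the proof of \cref{thm:planar} along the given sequence. Then I would trace which inequalities must become asymptotic equalities. Since the excess is $O(N^{-1/2})$, \cref{thm:stability} again places $S_N$ within $O(N^{-1/2})$ of a basis. In the coordinates of \cref{lem:centers} we therefore have \eqref{eq:planar-small}, the identity \eqref{eq:planar-objective}, and the chain
\[
 J\ge\sqrt2(\sqrt{V_1}+\sqrt{V_2})\ge\sqrt{2|V_1-V_2|}\ge\sqrt2-O(N^{-1}).
\]
The hypothesis says $J\to\sqrt2$. Each inequality in the chain is therefore asymptotically tight. Before extracting limits, I would note that $A_j,B_j,V_j$ are bounded, because $J$ is bounded and the $\kappa_\pm$ are positive. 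This permits passing to subsequences and arguing by uniqueness of the limit.

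\textbf{Cluster sizes and the smaller cluster.} Tightness of the middle inequality $\sqrt{V_1}+\sqrt{V_2}\ge\sqrt{|V_1-V_2|}$ forces $\min(V_1,V_2)\to0$. Relabel so that $V_1\to0$; this is where the relabeling enters. Then $V_2\to1$ and $|V_1-V_2|\to1$. By \eqref{eq:planar-defect}, the odd integer $n_1-n_2$ equals $V_1-V_2+o(1)$. Hence $n_1-n_2=-1$ eventually, so $n_1=m$ and $n_2=m+1$. Next, tightness of the weighted AM--GM step gives $\kappa_+A_1+\kappa_-B_1\ge\sqrt2\sqrt{A_1B_1}\ge\sqrt2\sqrt{V_1}$. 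The total $J$ must equal $\sqrt2\sqrt{V_2}+o(1)$, which tends to $\sqrt2$. Since the second-cluster terms are at least $\sqrt2\sqrt{V_2}$, the first-cluster contribution $\kappa_+A_1+\kappa_-B_1$ must tend to $0$. This gives $A_1,B_1\to0$, which is the uniform convergence of the smaller cluster's scaled angles to zero.

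\textbf{Extreme angles of the larger cluster.} For the second cluster we need $\kappa_-A_2+\kappa_+B_2\to\sqrt2$, $A_2B_2\ge V_2\to1$, and equality in AM--GM in the limit. Equality in $\kappa_-A+\kappa_+B\ge2\sqrt{\kappa_-\kappa_+AB}$ requires $\kappa_-A=\kappa_+B$. Combined with $AB=1$, this gives
\[
 A^2=\kappa_+/\kappa_-=(\sqrt2+1)^2 .
\]
Hence $A_2\to1+\sqrt2$ and $B_2\to\sqrt2-1$. Compactness plus uniqueness of this solution upgrades the subsequential limits to full limits.

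\textbf{Empirical distribution of the larger cluster.} I expect this to be the main point needing care. The tightness of \eqref{eq:range-variance} says that the empirical mean of $(z+A_2)(B_2-z)$, which is $A_2B_2-V_2$, tends to $0$. This nonnegative function vanishes only at the endpoints. Take any weak subsequential limit $\nu$ of the empirical laws; it exists because the supports stay in a bounded interval. Then $\nu$ is supported on $\{-(1+\sqrt2),\sqrt2-1\}$ and has mean zero, because the empirical means are exactly zero and the supports are uniformly bounded. The mean-zero condition determines the weight $q$ on $-(1+\sqrt2)$ by $q(1+\sqrt2)=(1-q)(\sqrt2-1)$. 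This gives $q=(\sqrt2-1)/(2\sqrt2)=(2-\sqrt2)/4=p$, which matches \eqref{eq:planar-limit-law}. Uniqueness of the limit then gives weak convergence of the full sequence.

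Throughout, the remainders $O(N^{-1/2})$ and $O(N^{-1})$ are uniform, by the remark following \eqref{eq:planar-small}. So the argument is just the stated lower bound with its equality cases tracked.
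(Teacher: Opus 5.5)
Your proposal is correct and follows essentially the same route as the paper. You track the asymptotic equality cases in $J\ge\sqrt2(\sqrt{V_1}+\sqrt{V_2})\ge\sqrt{2|V_1-V_2|}$, in the AM--GM step and in \eqref{eq:range-variance}, then pin down weak limits using the vanishing empirical mean of $(z+A_2)(B_2-z)$ and the zero-mean constraint. The only differences are in ordering and presentation: you force $\min(V_1,V_2)\to0$ before reading off $n_1-n_2=-1$, whereas the paper fixes $|n_1-n_2|=1$ first and labels by size; and you obtain the endpoints by compactness, whereas the paper uses the identity $(\kappa_-A_2+\kappa_+B_2)^2=2A_2B_2+(\kappa_-A_2-\kappa_+B_2)^2$.
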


\begin{proof}
All preceding uniform expansions apply to this sequence. Equation
\eqref{eq:planar-objective} gives $J\to\sqrt2$. The lower bound
$J\ge\sqrt{2|n_1-n_2|}+o(1)$ forces $|n_1-n_2|=1$ eventually, since
these differences are odd integers. Relabel the larger cluster as the
second one. Then $V_2-V_1\to1$ and
$\sqrt{V_1}+\sqrt{V_2}\le J/\sqrt2\to1$, so $V_1\to0$ and
$V_2\to1$.

The contribution of the second cluster to $J$ is at least
$\sqrt{2V_2}\to\sqrt2$. The first contribution therefore tends to
zero. Its coefficients are positive, so both of its endpoints tend to
zero. In the second cluster, equality in the bounds
\[
 (\kappa_-A_2+\kappa_+B_2)^2
 =2A_2B_2+(\kappa_-A_2-\kappa_+B_2)^2\ge2V_2
\]
is approached. This forces $A_2B_2\to1$ and
$\kappa_-A_2-\kappa_+B_2\to0$, giving the stated endpoints.
For a centered scaled angle $Y$, the empirical average of
$(Y+A_2)(B_2-Y)$ is $A_2B_2-V_2\to0$. This nonnegative expression is
bounded below away from zero on each closed subinterval strictly between
the limiting endpoints. Every weak limit is therefore supported on those
two endpoints. Its mean is zero, which fixes the masses in
\eqref{eq:planar-limit-law}.
\end{proof}

\begin{remark}
The symmetric construction in \cref{prop:construction} has, for $k=2$,
scaled endpoints $-1$ and $1$ in its larger cluster. Its excess is
$1/\sqrt{2\pi N}+O(N^{-1})$. It is therefore not asymptotically optimal.
The leading constant is $\sqrt2$ times the one in \cref{thm:planar}.
\end{remark}

The leading asymptotic geometry does not determine a unique finite frame.
For example, fix an integer $s\ge1$ and place $s$ copies at $e_2$. Split
the other $n'=m+1-s$ vectors in the larger cluster in proportions tending
to $p$ and $1-p$. In \eqref{eq:planar-construction}, replace $n$ by $n'$
and $N$ under the square root by $2n'-1$. The two split groups then have
doubled-angle sum $n'-1=m-s$. Adding the $s$ copies at $e_2$ restores
\eqref{eq:planar-exact}. The scaled endpoint limits are unchanged, so
these frames attain the same leading constant while having an additional
direction. No exact finite-$N$ minimizer is identified by this argument.

\section{Limiting constraints in higher rank}\label{sec:tangent}

The planar calculation uses that a centered real distribution is controlled
by its two endpoints. In higher rank the first-order width depends on the
whole tangent support. We record the constraints that follow rigorously
from near-minimality and state the remaining obstruction.

Fix $k$, $1\le r<k$, and $N=mk+r$. Consider tight frames with excess
at most $M N^{-1/2}$, for a fixed $M$. By \cref{thm:stability,lem:centers},
after an orthogonal transformation their clusters have the form
\begin{equation}\label{eq:tangent-parameter}
 u_i=\frac{e_j+h_i}{\sqrt{1+\norm{h_i}^2}},\qquad
 h_i\perp e_j,\quad i\in I_j,
\end{equation}
where, uniformly in the sequence,
\begin{equation}\label{eq:tangent-bounds}
 \begin{gathered}
 \max_i\norm{h_i}=O_{k,M}(N^{-1/2}),\quad
 \frac1{n_j}\sum_{i\in I_j}h_i=O_{k,M}(N^{-1}),\\
 n_j=m+s_j,\quad s_j=O_{k,M}(1),\quad \sum_j s_j=r.
 \end{gathered}
\end{equation}
The mean estimate follows from the cluster-center estimate and
$h_i-P_{e_j^\perp}u_i=O(\norm{h_i}^3)$.

Expanding the diagonal entries of the frame equation gives
\begin{equation}\label{eq:tangent-diagonal}
 n_j-N/k=\sum_{i\in I_j}\norm{h_i}^2
 -\sum_{l\ne j}\sum_{i\in I_l}h_{i,j}^2+O_{k,M}(N^{-1}).
\end{equation}
For $j\ne l$, its off-diagonal entries give
\begin{equation}\label{eq:tangent-offdiagonal}
 0=\sum_{i\in I_j}h_{i,l}+\sum_{i\in I_l}h_{i,j}
   +\sum_{a\ne j,l}\sum_{i\in I_a}h_{i,j}h_{i,l}
   +O_{k,M}(N^{-1/2}).
\end{equation}
These remainders follow from
$(1+\norm h^2)^{-1}=1-\norm h^2+O(\norm h^4)$ and
\eqref{eq:tangent-bounds}. The off-diagonal equations include second-order
center shifts and cannot be discarded in a construction of exact frames.

Let $K_{j,N}=\{\sqrt N h_i:i\in I_j\}$, and define
\begin{equation}\label{eq:tangent-functional}
 \Psi(K_1,\ldots,K_k)
 =\sum_{j=1}^k\E\left[\ind_{\{g_j>\max_{l\ne j}g_l\}}h_{K_j}(g)\right].
\end{equation}
Ties have Gaussian probability zero.

\begin{lemma}\label{lem:first-variation}
For the above sequence,
\begin{equation}\label{eq:first-variation}
 \sqrt N\,\Delta(S_N)
 =\Psi(K_{1,N},\ldots,K_{k,N})+O_{k,M}(N^{-1/2}).
\end{equation}
\end{lemma}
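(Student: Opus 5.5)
Write $\epsilon=\max_i\norm{h_i}=O_{k,M}(N^{-1/2})$ and $S_N=\{u_i\}$. On each cone
\[
 C_j=\Bigl\{x:\ x_j>\max_{l\ne j}x_l\Bigr\}
\]
we will compare $h_{S_N}(x)$ with $x_j+h_{K_{j,N}}(x)/\sqrt N$. The errors have two sources: a curvature error of order $\epsilon^2\norm{x}$, and a boundary layer where $x_j-\max_{l\ne j}x_l$ is of order $\epsilon\norm x$. Since $a_k=\E\max_j g_j=\sum_j\E[\ind_{C_j}(g)g_j]$, the claimed identity reduces to showing
\[
 \E\Bigl[\ind_{C_j}(g)\bigl(h_{S_N}(g)-g_j-h_{K_{j,N}}(g)/\sqrt N\bigr)\Bigr]=O(N^{-1})
\]
for each $j$.

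\emph{Step 1: pointwise expansion on a cluster.} For $i\in I_j$ we have
\[
 \ip{x}{u_i}=\bigl(x_j+\ip{x}{h_i}\bigr)(1+\norm{h_i}^2)^{-1/2}=x_j+\ip{x}{h_i}+O(\epsilon^2\norm x).
\]
Taking the maximum over $i\in I_j$ gives
\[
 \max_{i\in I_j}\ip{x}{u_i}=x_j+h_{\{h_i:i\in I_j\}}(x)+O(\epsilon^2\norm x)
\]
uniformly in $x$. Since the error is pointwise and the constant is uniform, this step does not use the number of vectors.

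\emph{Step 2: the correct cluster attains the maximum away from the boundary.} Let
\[
 \gamma_j(x)=x_j-\max_{l\ne j}x_l .
\]
On $C_j$ we have $\max_{i\in I_l}\ip{x}{u_i}\le x_l+\epsilon\norm x+O(\epsilon^2\norm x)$ for $l\ne j$, while $\max_{i\in I_j}\ip{x}{u_i}\ge x_j-\epsilon\norm x-O(\epsilon^2\norm{x})$. Hence if $\gamma_j(x)>3\epsilon\norm x$, then $h_{S_N}(x)=\max_{i\in I_j}\ip{x}{u_i}$, and by Step 1 the integrand is $O(\epsilon^2\norm x)$. Its expectation is $O(\epsilon^2 r_k)=O(N^{-1})$.

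\emph{Step 3: the boundary layer, which is the main obstacle.} On the set $B_j=\{0<\gamma_j(x)\le3\epsilon\norm x\}$ the integrand is only $O(\epsilon\norm x)$ pointwise: all three of $h_{S_N}$, $x_j$ and the cluster support term lie within $O(\epsilon\norm x)$ of $\max_l x_l=x_j$. The Gaussian measure of $B_j$ is $O_k(\epsilon)$. To see this, split according to whether $\norm g\le T$: the density of $\gamma_j(g)$ near $0$ is bounded, and the tail $\E[\norm g^2\ind_{\norm g>T}]$ is small. More cleanly, $\E[\norm g\,\ind_{B_j}(g)]\le \E[\norm g\,\ind\{|\gamma_j(g)|\le 3\epsilon\norm g\}]$, and by Cauchy--Schwarz or direct polar integration (the angular measure of a $3\epsilon$-neighbourhood of a hyperplane is $O_k(\epsilon)$) this is $O_k(\epsilon)$. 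So the contribution of $B_j$ is $O(\epsilon\cdot\epsilon)=O(N^{-1})$.

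Summing over $j$ and multiplying by $\sqrt N$ gives $\sqrt N\Delta(S_N)=\Psi(K_{1,N},\dots,K_{k,N})+O(N^{-1/2})$, because $h_{K_{j,N}}=\sqrt N\,h_{\{h_i:i\in I_j\}}$. In this argument the first-order size of the cluster means plays no role. Those means enter only if one replaces the coordinate basis, and they are already $O(N^{-1})$ by \eqref{eq:tangent-bounds}; they matter only for later uses. The delicate point is the uniformity of the boundary estimate. I will handle it by writing the boundary-layer integrand bound $O(\epsilon\norm x)$ carefully, including the case where the maximizing cluster is wrong, and then integrating in polar coordinates. There $\norm g$ and the direction are independent, so the estimate reduces to $\E\norm g^2\cdot\sigma(\{\theta:|\gamma_j(\theta)|\le3\epsilon\})=O_k(\epsilon)$, with $\sigma$ the uniform measure on the sphere.
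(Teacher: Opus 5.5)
Your proposal is correct and is essentially the paper's argument: a pointwise $O(\epsilon^2\norm g)$ error from the normalization, plus a boundary layer near the hyperplanes $\{g_j=g_l\}$ of angular width $O(\epsilon)$ on which the error is only $O(\epsilon\norm g)$, integrated in polar coordinates to give $O(\epsilon^2)=O(N^{-1})$ before multiplying by $\sqrt N$. The difference is only organizational: the paper first replaces $h_{S_N}$ globally by $\max_j(g_j+h_{K_{j,N}}(g)/\sqrt N)$ and then compares with the term of the winning coordinate, while you split into the cones $C_j$ directly; also, in your last line the natural moment is $\E\norm g=r_k$ rather than $\E\norm g^2$, which is harmless.
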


\begin{proof}
Put $t=N^{-1/2}$ and let $R$ bound all the tangent sets. Normalization in
\eqref{eq:tangent-parameter} changes each Gaussian coordinate from
$g_j+t\ip{g}{z}$ by at most $C_Rt^2\norm g$. Hence the maximum differs
from $\max_j(g_j+t h_{K_{j,N}}(g))$ by the same bound.

Let $J$ be the index of the largest $g_j$. If the perturbed maximum uses
another index, then for some $j\ne l$,
$|g_j-g_l|\le2tR\norm g$. The difference between the perturbed maximum
and $g_J+t h_{K_{J,N}}(g)$ lies between zero and $2tR\norm g$ and
vanishes outside this union of events. Write $g=\rho\omega$ with
$\omega$ uniform on the sphere and independent of $\rho$. For each
fixed unit vector $v$, the spherical measure of
$\{|\ip{\omega}{v}|\le a\}$ is $O_k(a)$ for small $a$. This follows
from the coordinate density, which is bounded near zero, also when
$k=2$. There are only $\binom k2$ pairs. Thus the expected difference
is $O_{k,R}(t^2)$, because $\E\rho=r_k<\infty$. Taking expectations
proves the result.
\end{proof}

\begin{proposition}\label{prop:tangent-limits}
Every such sequence has a subsequence along which all $s_j$ are constant,
$K_{j,N}$ converge in Hausdorff distance to compact sets
$K_j\subset e_j^\perp$, and their empirical measures converge weakly
to probability measures $\nu_j$ supported in $K_j$. These limits satisfy
\begin{equation}\label{eq:limiting-moments}
 \int z\,d\nu_j(z)=0,\qquad
 ks_j-r=\tr\Sigma_j-\sum_{l\ne j}(\Sigma_l)_{jj},\qquad
 \Sigma_j=\int zz^{\mathsf T}\,d\nu_j(z).
\end{equation}
Along this subsequence,
\begin{equation}\label{eq:limiting-objective}
 \sqrt N\,\Delta(S_N)\longrightarrow\Psi(K_1,\ldots,K_k).
\end{equation}
\end{proposition}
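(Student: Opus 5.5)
The plan is a compactness extraction followed by passage to the limit in the expansions \eqref{eq:tangent-diagonal} and \eqref{eq:first-variation}. By \eqref{eq:tangent-bounds}, the integers $s_j$ are bounded uniformly, so they take finitely many values. A first subsequence makes them constant. The sets $K_{j,N}=\{\sqrt N h_i\}$ lie in a fixed ball of $e_j^\perp$, of radius $R=O_{k,M}(1)$. Compactness of the hyperspace of nonempty compact subsets of a ball (Blaschke selection) gives a further subsequence with $K_{j,N}\to K_j$ in Hausdorff distance, and $K_j\subset e_j^\perp$ because $e_j^\perp$ is closed. The empirical measures $\nu_{j,N}=n_j^{-1}\sum_{i\in I_j}\delta_{\sqrt N h_i}$ are probability measures on the same ball, so Prokhorov's theorem gives a weak limit $\nu_j$ along a further subsequence. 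To see that $\nu_j$ is supported in $K_j$, I would argue as in \cref{lem:compactness}: $\int\dist(z,K_j)\,d\nu_{j,N}\le d_H(K_{j,N},K_j)\to0$, and $\dist(\cdot,K_j)$ is continuous and bounded on the ball.

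The moment identities then follow from bounded continuous test functions on the ball. For the mean, \eqref{eq:tangent-bounds} gives $\int z\,d\nu_{j,N}=\sqrt N\,n_j^{-1}\sum h_i=O(N^{-1/2})\to0$. For the second moments, $\int zz^{\mathsf T}d\nu_{j,N}\to\Sigma_j$. I would then rewrite \eqref{eq:tangent-diagonal} in scaled form. The left side is $n_j-N/k=m+s_j-(mk+r)/k=s_j-r/k$. On the right, $\sum_{i\in I_j}\norm{h_i}^2=(n_j/N)\tr\int zz^{\mathsf T}d\nu_{j,N}$, and similarly $\sum_{i\in I_l}h_{i,j}^2=(n_l/N)(\int zz^{\mathsf T}d\nu_{l,N})_{jj}$. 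Since $n_l/N=m/N+O(1/N)\to1/k$, letting $N\to\infty$ gives $s_j-r/k=k^{-1}(\tr\Sigma_j-\sum_{l\ne j}(\Sigma_l)_{jj})$. Multiplying by $k$ yields \eqref{eq:limiting-moments}.

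For \eqref{eq:limiting-objective}, \cref{lem:first-variation} reduces the claim to showing $\Psi(K_{1,N},\ldots,K_{k,N})\to\Psi(K_1,\ldots,K_k)$. For this I would use the pointwise bound $|h_K(x)-h_{K'}(x)|\le\norm x\,d_H(K,K')$ from the proof of \cref{lem:compactness}, which applies verbatim to any compact sets. It gives $|\Psi(K_{\cdot,N})-\Psi(K_\cdot)|\le\sum_j\E[\norm g]\,d_H(K_{j,N},K_j)\le k\,r_k\max_j d_H(K_{j,N},K_j)\to0$; the indicator factors are at most one and do not matter.

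I do not expect a genuine obstacle. The step needing the most care is bookkeeping in the scaled diagonal identity: the counts $n_l$ differ from $N/k$ by $O(1)$ and the remainder is $O(N^{-1})$, so I must check that all these errors vanish after the passage to the limit. The one other point to verify is that the $O(\cdot)$ bounds in \eqref{eq:tangent-bounds} are uniform along the sequence, so that all points lie in a single ball as the compactness arguments require.
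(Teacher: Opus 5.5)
Your proposal is correct and follows the paper's proof step for step. You extract successive subsequences for the bounded integer shifts, the tangent sets (Blaschke selection) and the empirical measures. You pass to the limit in the mean bound and in the scaled diagonal identity using $n_l/N\to1/k$. You then get $\Psi(K_{1,N},\ldots,K_{k,N})\to\Psi(K_1,\ldots,K_k)$ from Hausdorff convergence of support functions, and combine this with \cref{lem:first-variation}. Beyond the paper's sketch, you write out the details it leaves implicit: the $\dist(\cdot,K_j)$ argument for the support of $\nu_j$, the identity $n_j-N/k=s_j-r/k$, and the explicit bound $\lvert h_K(x)-h_{K'}(x)\rvert\le\norm{x}\,d_H(K,K')$.
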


\begin{proof}
The integer shifts and tangent sets are uniformly bounded by
\eqref{eq:tangent-bounds}. Pass to subsequences using compactness of
bounded finite integer sets, the hyperspace of compact subsets of a
fixed ball, and probability measures on that ball. Weak limits remain
supported in the Hausdorff limits. The mean estimates give the first
identity in \eqref{eq:limiting-moments}. In
\eqref{eq:tangent-diagonal}, replace $h_i$ by $z_i/\sqrt N$ and use
$n_j/N\to1/k$. Bounded support makes the second moments continuous
under weak convergence, yielding the second identity. Finally, Hausdorff
convergence of the tangent sets gives convergence of their support
functions, dominated by $R\norm g$. Apply \cref{lem:first-variation}.
\end{proof}

There is a finite-dimensional relaxation of the limiting problem. Set
$D=k(k+1)/2$ and allow at most $D$ tangent points in each cluster, with
nonnegative probability weights, zero mean, and second moments satisfying
\eqref{eq:limiting-moments} for integers $s_j$ with sum $r$. Minimize
$\Psi$ over these data and denote the infimum by $\Gamma_{k,r}$.
This is a lower-bound problem, not an asserted formula for an asymptotic
constant.

\begin{proposition}\label{prop:relaxation}
The relaxation has a positive attained minimum and
\begin{equation}\label{eq:relaxation-bound}
 0<\Gamma_{k,r}\le
 \liminf_{m\to\infty}\sqrt{mk+r}\,(L_{mk+r,k}-a_k).
\end{equation}
\end{proposition}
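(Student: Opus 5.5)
The proof has three parts: attainment of the minimum, comparison with the liminf, and positivity.

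\emph{Attainment.} The feasible set of the relaxation is compact. Each integer $s_j$ is determined by the second moments through \eqref{eq:limiting-moments}, and the trace identities give a priori bounds, so only finitely many integer vectors $(s_j)$ with sum $r$ need to be considered. For a fixed such vector, we must bound the tangent points. I expect this to be the main obstacle, because a point with tiny weight can sit far away. I would handle it with a coercivity argument. By isolating the cone where $g_j$ dominates, one shows $\E[\ind_{\{g_j>\max_l g_l\}}h_{K_j}(g)]\ge c\max_{z\in K_j}\norm{z}-C\norm{\text{mean}}$, which is the analogue of \eqref{eq:gain-expectation}. Since the means are zero, $\Psi$ controls $\max_j\sup_{z\in K_j}\norm z$. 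We may therefore restrict to a sublevel set, which is bounded. On it the data (at most $D$ points and weights per cluster) range over a compact set, the constraints are closed, and $\Psi$ is continuous in the points because support functions are Lipschitz. The infimum is therefore attained.

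\emph{Upper bound.} Take a sequence $m\to\infty$ along which $\sqrt N(L_{N,k}-a_k)$ tends to the liminf. That limit is finite by \cref{thm:arithmetic}, so the minimizers have excess at most $MN^{-1/2}$. Apply \cref{prop:tangent-limits} to obtain limits $K_j,\nu_j$ satisfying \eqref{eq:limiting-moments}, with $\sqrt N\Delta\to\Psi(K_1,\dots,K_k)$. To reduce the support to at most $D$ points, apply Carath\'eodory as in \cref{prop:compact-equality} to each $\nu_j$. The constraints on $\nu_j$ are the mean in $e_j^\perp$ ($k-1$ linear conditions) and the relevant second moments. Of these, the entries of $\Sigma_j$ entering \eqref{eq:limiting-moments} are $\tr\Sigma_j$ and $(\Sigma_j)_{ll}$, and fewer than $D-1$ moment functionals are involved in total. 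This gives a measure $\nu_j'$ supported on at most $D$ points of $\supp\nu_j\subset K_j$ with the same moment data. Its support $K_j'\subset K_j$, and $h_{K_j'}\le h_{K_j}$ makes $\Psi$ no larger. Hence $\Gamma_{k,r}$ is at most the liminf.

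\emph{Positivity.} Suppose the minimum equals $0$ at some feasible configuration. By the coercivity bound, every point vanishes, so all $\Sigma_j=0$. Then \eqref{eq:limiting-moments} forces $ks_j=r$ for every $j$. This is impossible, since $1\le r<k$ means $k\nmid r$ while $s_j$ is an integer. Hence the minimum is positive, mirroring \cref{lem:integer}.
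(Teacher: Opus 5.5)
Your proposal is correct and follows essentially the same route as the paper: the same Carath\'eodory reduction of the limiting measures from \cref{prop:tangent-limits} (you preserve only the means and the diagonal second moments, which is all \eqref{eq:limiting-moments} needs and still gives at most $D$ points), the same $\beta_k$-coercivity coming from $0\in\conv K_j$, and the same contradiction $ks_j=r$ for positivity. One ordering fix: the feasible set is not compact, and the integers $s_j$ are not bounded a priori by the trace identities alone; they become bounded only after the sublevel-set bound $\max_{j,z\in K_j}\norm z\le\Psi/\beta_k$ controls the second moments, which is the order the paper uses.
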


\begin{proof}
For any limit in \cref{prop:tangent-limits}, apply Carath\'eodory's
theorem to $z\mapsto(z,zz^{\mathsf T})$ on $K_j\subset\R^{k-1}$.
Its target has dimension $(k-1)+(k-1)k/2=D-1$. Thus at most $D$ points
preserve its mean and second moments. Restricting the support to these
points can only decrease $\Psi$. Apply this reduction to a subsequence
of minimizing frames that realizes the liminf to obtain the upper
inequality in \eqref{eq:relaxation-bound}. This also proves that the
relaxation is feasible.

For each feasible cluster, zero mean implies $0\in\conv K_j$, so
$h_{K_j}\ge0$. On the Gaussian box \eqref{eq:box} with $q_j=e_j$,
the $j$th coordinate wins. For any $z\in K_j\subset e_j^\perp$,
$h_{K_j}(g)\ge(\ip{g}{z})_+$. The argument defining $\beta_k$ in
\eqref{eq:beta} gives
$\Psi\ge\beta_k\max_{j,z\in K_j}\norm z$.
Consequently sublevel sets have bounded tangent points. The moment
constraints then bound all integers $s_j$. Pad each cluster to $D$
points by repetitions with zero weights. Compactness of the points and
weights, and continuity of $\Psi$ and the constraints, give attainment.
If the minimum were zero, all tangent points would be zero, so
\eqref{eq:limiting-moments} would give $ks_j=r$ for every $j$. This
contradicts $0<r<k$.
\end{proof}

The relaxation omits an exact recovery theorem. One must still approximate
its weighted tangent configurations by integer multiplicities and correct
all frame equations with an error small enough to preserve the leading
width. Neither \cref{prop:tangent-limits} nor
\cref{prop:relaxation} proves that this can be done. In particular, they
do not prove convergence of the scaled minima for $k\ge3$, nor that
every optimal cluster size is $m$ or $m+1$. The integer shifts and tangent
supports are genuine variables in the relaxed problem.

The Paulsen problem concerns correction of nearly equal-norm Parseval
frames. Quantitative results such as \cite{HamiltonMoitra19} control a
sum of squared displacements. Applying them here would require an
additional estimate on the largest displacement after rescaling to unit
norm. Such a recovery estimate is not supplied by the present argument.

\section{Further questions}

The global lower constant $c_k$ in \cref{thm:stability} is obtained by
compactness. Determining an explicit bound with useful dependence on $k$
would strengthen the result for growing rank. A separate question is the
optimal local constant near an orthonormal basis, where the proof reduces
the problem to Gaussian gains on coordinate regions.

For $k\ge3$ and $1\le r<k$, convergence of the scaled minima
remains open. The relaxation
in \cref{prop:relaxation} gives a positive lower bound, but its sharpness
requires an exact recovery argument. For $k=2$, \cref{thm:planar}
settles the leading constant and the $O(N^{-1})$ remainder. An exact
finite-$N$ formula and a classification of exact minimizers remain open
in the present analysis.

Finally, approximate isotropy would require a version of the cluster-center
argument that tracks both the Gaussian excess and the error in the second
moment matrix. Such a result could extend block recovery to correlation
matrices whose nonzero eigenvalues are close to, rather than exactly at,
$N/k$. No such extension is asserted here.

\appendix
\section{Appendix}
\subsection{Computational diagnostics}\label{app:diagnostics}

The proofs above are analytic. The accompanying script \texttt{verify.py}
provides reproducible numerical diagnostics for the construction and the
cluster-center estimates; the output is recorded in \texttt{verification.\allowbreak json}.
It checks 264 instances of \eqref{eq:construction-vectors}, spanning
$2\le k\le12$, every nonzero residue modulo $k$, and four choices of $m$
for each pair $(k,r)$. These checks verify the row norms, multiplicities,
frame operator, and stated distance bound to floating-point accuracy.
There are also 270 checks of the cluster-center calculation, including
weighted unions of nearby orthonormal bases with a mixing weight of $10^{-10}$.

The planar script \texttt{verify\_planar.py} checks 28 asymmetric frames,
300 perimeter expansions, and 600 range-variance identities. The frame
checks include variants with central vectors. Results are saved in
\texttt{planar-\allowbreak verification.\allowbreak json}.
The planar widths in \cref{tab:examples} are computed from
\eqref{eq:perimeter}. They evaluate explicit frames, rather than the
unknown exact finite-$N$ minima.

\begin{table}[htbp]
\centering
\caption{Scaled excess $\sqrt N\,(w(S)-a_2)$ for the symmetric and
asymmetric planar constructions, evaluated in floating point.}\label{tab:examples}
 \begin{tabular}{rrr}
  \toprule
  $N$ & Symmetric & Asymmetric\\
  \midrule
  21     & 0.39498841 & 0.22854221\\
  201    & 0.39518184 & 0.26537954\\
  2001   & 0.39748935 & 0.27718280\\
  20001  & 0.39845605 & 0.28057974\\
  200001 & 0.39878583 & 0.28161977\\
  \midrule
  Limit & $1/\sqrt{2\pi}$ & $1/(2\sqrt\pi)$\\
  \bottomrule
 \end{tabular}
\end{table}

Additional examples in dimensions $3$ and $5$ use eight independent scrambled
Sobol runs of $65\,536$ points each, with the same Gaussian samples used for
the constructed maximum and the reference basis maximum. The reported
standard errors measure variation between randomized runs. These diagnostics
are not proofs of the theorems, of optimality of the construction, or of
publication novelty.

\subsection{Use of AI-assisted tools}
The author is responsible for the mathematical theory and arguments in this
manuscript. ChatGPT assisted with finding relevant sources and reviewing the
mathematics.

\end{document}